  \newtheorem{thm}{Theorem}%[section]
\newtheorem{lem}{Lemma}%[section]
\newtheorem{cor}{Corollary}%[section]
\begin{document}

\begin{frontmatter}

\title{Positive semidefinite interval of matrix pencil and its applications for the generalized trust region subproblems}
%\tnotetext[mytitlenote]{Fully documented templates are available in the elsarticle package on \href{http://www.ctan.org/tex-archive/macros/latex/contrib/elsarticle}{CTAN}.}

\author[mymainaddress]{Thi Ngan Nguyen}
\ead[url]{nguyenthingan@ttn.edu.vn}
\author[mymainaddress]{Van-Bong Nguyen\corref{mycorrespondingauthor}}
\cortext[mycorrespondingauthor]{Corresponding author}
\ead{nvbong@ttn.edu.vn}

\address[mymainaddress]{Department of Mathematics, Tay Nguyen
University, 632090, Vietnam}

\begin{abstract}
We are concerned with finding the set $I_{\succeq}(A,B)$ of real values $\mu$ such that the matrix pencil $A+\mu B$ is positive semidefinite.
If $A, B$ are not simultaneously diagonalizable via congruence (SDC), $I_{\succeq}(A,B)$   either is empty or has only
one value $\mu.$   When $A, B$ are SDC,
  $I_{\succeq}(A,B),$ if not empty, can be a singleton or an interval. Especially, if $I_{\succeq}(A,B)$ is an interval and
   at least one of the matrices is nonsingular then its interior is the positive definite interval
$I_{\succ}(A,B).$ If $A, B$  are both singular, then even $I_{\succeq}(A,B)$ is an interval, its interior
may not be $I_{\succ}(A,B),$ but $A, B$ are then decomposed to block diagonals of submatrices
$A_1, B_1$ with $B_1$ nonsingular  such that $I_{\succeq}(A,B)=I_{\succeq}(A_1,B_1).$
Applying $I_{\succeq}(A,B),$  the {\em hard-case} of the generalized trust-region subproblem (GTRS)
can be dealt with by only solving a system of linear equations or reduced to the {\em easy-case}
of a GTRS of smaller size.

\end{abstract}

\begin{keyword}
Simultaneously diagonalizable via congruence \sep  Trust region subproblem \sep
 Generalized trust region subproblem\sep Positive semidefinite interval \sep  Matrix pencil
\MSC[2010] 15A18\sep 15A21\sep 15A22 \sep 15A23\sep 15A27 \sep 90C20\sep  90C26
\end{keyword}

\end{frontmatter}

\linenumbers

\section{Introduction}
	Let $\mathcal{S}^n$ be the space of $n\times n$ real symmetric matrices and  $A, B\in \mathcal{S}^n. $  In this paper we compute the set
\begin{align*}
I_{\succeq}(A,B)=\{\mu\in\Bbb R: A+\mu B\succeq0\}
\end{align*}
and then  apply it for solving the following  optimization problem
\begin{equation}\label{original}
 \hspace*{0.6cm}
\begin{array}{lll}
\lambda^*=&\min &f(x)=x^TAx+2a^Tx\\
&{\rm s.t.} & g(x)=x^TBx+2b^Tx+ c\le 0,
\end{array}
\end{equation}
where   $a, b\in \Bbb R^{n}$ are $n-$ dimension vectors,  $c\in\Bbb R.$ This problem is referred as the generalized trust region subproblem (GTRS) since it contains the
trust region subproblem (TRS), when $B=I$ is the identity matrix, $b=0$ and $c=-1,$ as a special case.

Our   study is inspired
by the following results obtained by Mor\'{e} \cite{MJ}: i)  a vector $x^*\in\Bbb R^n$ is a global optimizer of the
GTRS \eqref{original} if and only if $g(x^*)\le 0$ and there exists $\mu^*\in I_{\succeq}(A,B)\cap [0,\infty)$
such that
\begin{align}
\nabla f(x^*)+\mu^*\nabla g(x^*)=0,\label{mu1}\\
\mu^*g(x^*)=0 \label{mu2};
\end{align}
and ii) if the set defined by $I_{\succ}(A,B)=\{\mu\in\Bbb R: A+\mu B\succ0\}$ is nonempty then
it is an open interval  and
the function $\varphi(\mu):=g[x(\mu)]$ is   strictly decreasing on $I_{\succ}(A,B),$ unless $x(\mu)$
 uniquely solved from \eqref{mu1} is  constant on $ I_{\succ}(A,B).$
 These two results suggest that $\mu^*$ can be found efficiently whenever   $I_{\succeq}(A,B)$ is computed.
 Unfortunately, in literature,  results on $I_{\succeq}(A,B) $ are
  very limited. The earliest study on  computing  $I_{\succeq}(A,B)$  was, perhaps, by
  Caron and  Gould  \cite{Caron}, where the authors suppose that $A$ is positive semidefinite and $B$ is of rank one or two.
Song \cite{Song} computed  $I_{\succeq}(A,B)$ under the assumption that $\mathcal{R}(B)\subset\mathcal{R}(A),$
where $\mathcal{R}(\cdot)$ denotes the range of a matrix.  Mor\'{e} \cite{MJ}, Adachi et al.   \cite{Adachi}
computed $I_{\succeq}(A,B)$ under the assumption that $I_{\succ}(A,B)\ne\emptyset.$
Out of above conditions, computing $I_{\succeq}(A,B)$
is still an open question so far. However, if  $I_{\succ}(A,B)\ne \emptyset,$ $I_{\succeq}(A,B) $ is then the closure of
$I_{\succ}(A,B) $ and
computing $I_{\succeq}(A,B)$ is much easier since the matrices $A,B$ are then  simultaneously diagonalizable via congruence (SDC), i.e.,
there exists a nonsingular matrix $P$ such that $P^TAP$ and $P^TBP$ are all diagonal, please see   \cite{Horn}.
    Mor\'{e} \cite{MJ}    proposed an algorithm for finding $\mu^*$ under the assumption that $I_{\succ}(A,B)$
is nonempty, while the case $I_{\succ}(A,B)=\emptyset$ is referred as the {\em hard-case}.
The assumption $I_{\succ}(A,B)\ne\emptyset$ is so strictive that it restricts the GTRS \eqref{original}  into a
very special class which always attains a unique optimal solution. Moreover, if
$I_{\succ}(A,B)\ne\emptyset$  then  $A, B$ are SDC
and the GTRS \eqref{original} is  equivalently reformulated as a  convex second-order cone programming (SOCP) \cite{Ben-Tal}.
This result is then
   extended that any bounded GTRS can be reformulated as an SOCP even the SDC condition fails  \cite{Jiang1}.
 The assumption  $I_{\succ}(A,B)\ne\emptyset$  also allows
to solve GTRS \eqref{original} by solving only one eigenpair of a generalized eigenvalue problem of $(2n+1)\times(2n+1)$ dimension \cite{Adachi}.
A more recent result by  Jiang and  Li  can solve GTRS \eqref{original} even in linear time $O(n)$ but under a stricter condition that
 there exist $\mu\in(0,1]$ and $\lambda_{\rm min}(B)\le -\xi<0$ such that $\mu A+(1-\mu)B\succeq \xi I,$
 here $\lambda_{\rm min}(B)$ is the smallest eigenvalue of $B$ \cite{Jiang2}.  Hsia et al. \cite{Hsia}
 solve the GTRS \eqref{original} when $A, B$ are SDC, while the case when $A, B$ are not SDC is still referred as the {\em hard-case}.

In this paper, we show that not only when $I_{\succ}(A,B)\ne \emptyset$ but also even $I_{\succ}(A,B)=\emptyset$
     the interval $I_{\succeq}(A,B)$ can be computed by only solving  a generalized eigenvalue problem of $n\times n$ dimension. 
     Specifically, we show that   if $A, B$ are not SDC then $I_{\succeq}(A,B)$
either  is  empty or has only one point: $I_{\succeq}(A,B)=\{\mu\}.$ Such a value $\mu$ can be found efficiently
and then checked whether $\mu^*=\mu.$
  If $A, B$ are SDC
and $B$ is nonsingular, then  the matrix pencil
$A+\mu B$ can always be decomposed into the form
\begin{align*}
 P^T(A+\mu B)P=\texttt{diag}((\lambda_1+\mu)B_1,(\lambda_2+\mu)B_2\ldots,(\lambda_k+\mu)B_k),
\end{align*}
where $B_i$ are $m_i\times m_i$ symmetric matrices, $\lambda_1>\lambda_2>\ldots>\lambda_k$ are distinct
eigenvalues of the matrix $B^{-1}A.$  The set
$I_{\succeq}(A,B)$ is then computed quickly since $A+\mu B\succeq0$ is equivalent to  $(\lambda_i+\mu)B_i\succeq0$ for
all $i=1,2,\ldots,k.$ If $B$ is singular and  $A$ is nonsingular, we can decompose
$B, A$ to the   taking the forms $U^TBU=\texttt{diag}(B_1,0),$
$U^TAU=\texttt{diag}(A_1,A_3),$
where  $B_1, A_1$ are symmetric of the same size, $B_1$ is nonsingular such that
  if $A_3\succ0$ then $I_{\succeq}(A,B)=I_{\succeq}(A_1,B_1),$ otherwise
 $I_{\succeq}(A,B)= \emptyset.$
Especially,  if $I_{\succeq}(A,B)$ has more than one point, then $I_{\succ}(A,B)\ne \emptyset$ and $I_{\succ}(A,B)={\rm int}(I_{\succeq}(A,B)),$
 please see Corollary \ref{cor1} below.
If $A, B$ are SDC  and both are singular, then there always exists a nonsingular
matrix $U$ such that
$A, B$ are decomposed  to either the following form
\begin{align}\label{l1}
 U^TBU=\texttt{diag}(B_1, 0) \text{ and }
 U^TAU=\texttt{diag}(A_1, 0)
\end{align}   or
\begin{align}\label{l2}
 U^TBU=\texttt{diag}(B_1, 0) \text{ and }
 U^TAU=\texttt{diag}(A_1, A_4),
\end{align}
where $A_4$ is diagonal and, in both cases, $A_1, B_1$ are diagonal and of the same size, $B_1$ is nonsingular.
If $A, B$ are decomposed to \eqref{l2} and $A_4$  has even one  diagonal negative element then
$I_{\succeq}(A,B)= \emptyset.$
Otherwise, in both cases, $I_{\succeq}(A,B)=I_{\succeq}(A_1,B_1)$ with $B_1$ nonsingular.
To apply $I_{\succeq}(A,B)$ for solving   GTRS \eqref{original}, we consider the set of candidates of Lagrange multipliers
$I=I_{\succeq}(A,B)\cap[0,\infty).$
\begin{enumerate}
\item If $I=\emptyset,$ \eqref{original} has no optimal solution since it is unbounded from below;
\item If $I$ is singleton: $I=\{\mu\},$ we  need only to solve  linear equations to check whether $\mu^*=\mu;$
\item If $I$ is an interval and $I_{\succ}(A,B)\ne \emptyset,$ then there is a unique optimal Lagrange multiplier $\mu^* \in I.$ If
$\mu^*$ is not an endpoint of $I,$ a bisection algorithm can find $\mu^*$ in the interior of $I$
  since the function $\varphi(\mu)$
is  strictly decreasing on $I.$
 \item If $I$ is an interval and $I_{\succ}(A,B)= \emptyset,$ $A, B$ are converted to the form either \eqref{l1} or \eqref{l2}
and $I_{\succ}(A_1,B_1)\ne \emptyset.$ In this case, the  GTRS \eqref{original} is either unbounded below or reduced
to a GTRS of $p $ variables with matrices  $A_1, B_1$ such that $I_{\succ}(A_1,B_1)\ne\emptyset.$
\end{enumerate}

\section{Computing the positive semidefinite interval $I_{\succeq}(A,B)$}\label{sec1}
In this section, we show computing $I_{\succeq}(A,B)$ in two separate cases: $A, B$ are SDC and $A, B$ are not SDC.
For the former case, we need first the following result.
 \begin{lem}[\cite{Greub}]\label{3}
Let  $A, B\in \mathcal{S}^n $  and  $B$ be  nonsingular.
 Then $A, B$ are SDC  if and only if  there is a  nonsingular real matrix $P$ such that $P^{-1}B^{-1}AP$ is  a real diagonal matrix.
 \end{lem}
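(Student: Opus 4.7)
The plan is to prove the two directions separately. For the forward implication, I would start from the SDC hypothesis: there exists a nonsingular $Q$ with $Q^T A Q = D_A$ and $Q^T B Q = D_B$ both diagonal. Since $B$ is nonsingular, $D_B$ is nonsingular as well. Inverting the second identity gives $B^{-1} = Q D_B^{-1} Q^T$, and multiplying by $A = Q^{-T} D_A Q^{-1}$ yields $B^{-1} A = Q D_B^{-1} D_A Q^{-1}$. Hence $P := Q$ achieves $P^{-1} B^{-1} A P = D_B^{-1} D_A$, a real diagonal matrix.

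For the converse, assume $P^{-1} B^{-1} A P = D$ is diagonal, and group the columns of $P$ by the distinct eigenvalues: write $P = [P_1 \mid P_2 \mid \cdots \mid P_k]$ where $P_i$ is $n \times m_i$ and the columns of $P_i$ span the $\lambda_i$-eigenspace of $B^{-1}A$, so that $A P_i = \lambda_i B P_i$. The key observation is a $B$-orthogonality statement: for $i \neq j$, using that $A, B$ are symmetric,
\begin{equation*}
\lambda_i P_j^T B P_i = P_j^T A P_i = (P_i^T A P_j)^T = \lambda_j (P_i^T B P_j)^T = \lambda_j P_j^T B P_i,
\end{equation*}
and $\lambda_i \neq \lambda_j$ forces $P_j^T B P_i = 0$ and therefore $P_j^T A P_i = 0$. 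This makes $P^T B P$ and $P^T A P$ both block diagonal with blocks $P_i^T B P_i$ and $P_i^T A P_i = \lambda_i P_i^T B P_i$, respectively.

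The main obstacle is the within-block step, which must handle repeated eigenvalues: the $m_i \times m_i$ diagonal blocks $P_i^T B P_i$ need not be diagonal, only symmetric. I would first argue they are nonsingular (since $P^T B P$ is nonsingular as a product of nonsingular matrices, and a block-diagonal matrix is nonsingular iff all its diagonal blocks are), and then apply the standard congruence diagonalization of a single real symmetric matrix (via the spectral theorem, or Sylvester's law) to obtain a nonsingular $Q_i$ with $Q_i^T (P_i^T B P_i) Q_i$ diagonal. Because $P_i^T A P_i = \lambda_i P_i^T B P_i$, the same $Q_i$ simultaneously diagonalizes the $A$-block. Assembling $P' := P \cdot \mathtt{diag}(Q_1, \ldots, Q_k)$ then gives a single nonsingular matrix with $P'^T A P'$ and $P'^T B P'$ both diagonal, completing the SDC conclusion.
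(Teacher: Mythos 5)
Your proof is correct in both directions, but note that the paper does not actually prove this lemma: it is quoted as a known result from Greub's textbook, so there is no in-paper argument to match yours against. What the paper does, immediately after the statement, is take the diagonalizer $P$ of $B^{-1}A$ and invoke Uhlig's result (Lemma \ref{bd1}) to get the block structure of $P^TBP$ conformal with the eigenvalue multiplicities: the symmetry of $P^TBP\,J=P^TAP$ forces $P^TBP$ to be block diagonal. Your converse direction reaches the same block decomposition by a more elementary route, namely the $B$-orthogonality identity $\lambda_i P_j^TBP_i=\lambda_j P_j^TBP_i$ for $i\neq j$, and then finishes by diagonalizing each symmetric block $P_i^TBP_i$ via the spectral theorem, which automatically handles the $A$-blocks since they equal $\lambda_i P_i^TBP_i$; your forward direction ($B^{-1}=QD_B^{-1}Q^T$, hence $Q^{-1}B^{-1}AQ=D_B^{-1}D_A$) is the standard computation. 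So what your argument buys is a self-contained proof of the cited lemma using only the spectral theorem, whereas the paper outsources the equivalence to Greub and reserves the Jordan-form machinery (Lemma \ref{bd1}) for the subsequent block decomposition. Two small cosmetic points: the claim that the columns of $P_i$ span the $\lambda_i$-eigenspace is true but unnecessary (all you use is $AP_i=\lambda_i BP_i$), and grouping equal diagonal entries of $D$ implicitly permutes the columns of $P$, which is harmless since a permutation matrix is nonsingular and you may say so explicitly.
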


Now, if  $A, B$ are SDC and $B$ is   nonsingular, by Lemma \ref{3}, there is a nonsingular matrix $P$ such that
$$ J:= P^{-1} B^{-1}A P =\texttt{diag}(\lambda_1I_{m_1}, \ldots, \lambda_k I_{m_k}),$$
   is a diagonal matrix, where $\lambda_1, \lambda_2, \ldots, \lambda_k$
   are the $k$ distinct eigenvalues of $B^{-1}A,$ $I_{m_t}$ is the identity matrix of size $m_t \times m_t$
   and $m_1+m_2+\ldots+m_k=n.$
  We can suppose without loss of generality that
      $\lambda_1>\lambda_2>\ldots>\lambda_k.$
Using $J$ together with the following result we show how to simultaneously decompose $A$ and $B$ into block diagonals.
\begin{lem}[\cite{Uhlig76}]\label{bd1}
Let $K $ be a Jordan matrix of form
$$K=\texttt{diag}(C(\lambda_{1}), C(\lambda_{2}), \cdots, C(\lambda_{k})),$$
where $C(\lambda_{i})=\texttt{diag}(K_{i_{1}}(\lambda_{i}), K_{i_{2}}(\lambda_{i}),\cdots,K_{i_{t_i}}(\lambda_{i})), i=1,2,\ldots, k, $
are Jordan blocks associated with eigenvalue  $\lambda_{i}$ and
\par
\[
K_{i_j}(\lambda_i)=
\left(
\begin{array}{cccccc}
 \lambda_{i} & 1 & 0 &\cdots &\cdots & 0\\
0 & \lambda_{i} & 1&\cdots &\cdots & 0\\
 \cdots &\cdots &\cdots&\cdots &\cdots &\cdots\\
\cdots &\cdots &\cdots&\cdots &\cdots &\cdots\\
 0 & 0 &\cdots &\cdots &\lambda_{i} & 1\\
 0 & 0 &\cdots &\cdots & 0 &\lambda_{i}
\end{array}
\right)_{(ij)}, \hspace{1cm}  j=1, 2, \cdots, t_i.
\]
For a symmetric matrix $S,$ if  $SK$ is symmetric, then $S$ is block diagonal  and  $S=\texttt{diag}(S_{1}, S_{2},\cdots, S_{k})$ with
$${\rm dim} S_{i}={\rm dim }C(\lambda_{i}).$$
\end{lem}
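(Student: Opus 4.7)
The plan is to reduce the claim to a collection of Sylvester-type matrix equations and then invoke the classical fact that a Sylvester equation whose two coefficient matrices have disjoint spectra admits only the zero solution. Partition $S$ into blocks $S_{ij}$ conforming to the block-diagonal partition of $K$ induced by the groups $C(\lambda_1),\ldots,C(\lambda_k)$, so that the size of $S_{ij}$ matches $(\dim C(\lambda_i))\times (\dim C(\lambda_j))$. What must be shown is precisely that $S_{ij}=0$ whenever $i\neq j$; the diagonal blocks $S_{ii}$ are then defined to be the desired $S_i$, and no further structure on them is claimed.

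The symmetry hypotheses give a clean matrix identity to exploit. Since $S$ is symmetric and $SK$ is symmetric, I would write
\[
SK=(SK)^{T}=K^{T}S^{T}=K^{T}S.
\]
Because $K$ is block diagonal, the $(i,j)$-block of this identity reads
\[
S_{ij}\,C(\lambda_j)=C(\lambda_i)^{T}\,S_{ij},
\]
which is a Sylvester equation $C(\lambda_i)^{T}X-X\,C(\lambda_j)=0$ in the unknown $X=S_{ij}$.

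Now I would invoke the standard Sylvester--Rosenblum theorem: the linear map $X\mapsto C(\lambda_i)^{T}X-X\,C(\lambda_j)$ is invertible whenever the spectra of $C(\lambda_i)^{T}$ and $C(\lambda_j)$ are disjoint. The spectrum of $C(\lambda_i)^{T}$ consists only of $\lambda_i$ and that of $C(\lambda_j)$ consists only of $\lambda_j$, so for $i\neq j$ the eigenvalues are distinct by the setup of the lemma, and hence $S_{ij}=0$. This yields the desired block-diagonal form $S=\texttt{diag}(S_{1},\ldots,S_{k})$ with $\dim S_i=\dim C(\lambda_i)$.

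The only delicate point I foresee is quoting the Sylvester--Rosenblum result in a self-contained way; if it is preferred to avoid that citation, one can instead prove the vanishing of $S_{ij}$ directly by examining the equation $C(\lambda_i)^{T}X=X\,C(\lambda_j)$ entry-by-entry using the two-diagonal structure of the Jordan blocks, or by observing that any such $X$ intertwines nilpotent-plus-scalar operators with different scalars, so iterating $(C(\lambda_i)^{T}-\lambda_j I)$ on both sides eventually forces $X=0$ because $(C(\lambda_i)^{T}-\lambda_j I)$ is invertible while $(C(\lambda_j)-\lambda_j I)$ is nilpotent. Either route is routine; the conceptual content is entirely in the reduction $SK=K^{T}S$ and the spectral separation of the Jordan groups.
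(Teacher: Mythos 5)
Your argument is correct. Note that the paper itself offers no proof of this lemma: it is quoted verbatim from Uhlig (1976), so there is no in-paper argument to compare against. Your reduction is the standard one and is complete as written: from $S=S^{T}$ and $(SK)^{T}=SK$ you get $SK=K^{T}S$, the block $(i,j)$ of this identity is the Sylvester equation $C(\lambda_i)^{T}S_{ij}-S_{ij}C(\lambda_j)=0$, and since the spectra $\{\lambda_i\}$ and $\{\lambda_j\}$ are disjoint for $i\neq j$ (the groups correspond to distinct eigenvalues), the off-diagonal group blocks vanish, which is exactly the claimed block-diagonal structure with $\dim S_i=\dim C(\lambda_i)$. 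Your fallback argument avoiding the Sylvester--Rosenblum citation is also sound: from $C(\lambda_i)^{T}S_{ij}=S_{ij}C(\lambda_j)$ one gets $(C(\lambda_i)^{T}-\lambda_j I)^{m}S_{ij}=S_{ij}(C(\lambda_j)-\lambda_j I)^{m}=0$ for large $m$, and invertibility of $C(\lambda_i)^{T}-\lambda_j I$ forces $S_{ij}=0$; this elementary version is essentially the argument underlying Uhlig's original lemma, so either route is an acceptable self-contained justification of the cited result.
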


Observe  that $P^TBP.J=P^TAP$ and $P^TAP$ is symmetric.  Lemma \ref{bd1} indicates that  $P^TBP$ is a block diagonal matrix with the same partition as $J.$ That is
\begin{align}\label{ct1}
P^TBP=\texttt{diag}(B_1,B_2\ldots,B_k),
\end{align}
where $B_t$ is real symmetric matrices of size $m_t \times m_t$ for every
$t=1,2, \ldots, k.$
We now have
\begin{align}\label{ct2}
P^TAP=P^TBP.J=\texttt{diag}(\lambda_1B_1,\lambda_2B_2\ldots,\lambda_kB_k).
\end{align}
Both \eqref{ct1} and \eqref{ct2} show that   $A, B$ are now decomposed into the same block structure and the matrix
pencil $A+\mu B$ now becomes
\begin{align}\label{ct3} P^T(A+\mu B)P=\texttt{diag}((\lambda_1+\mu)B_1,(\lambda_2+\mu)B_2\ldots,(\lambda_k+\mu)B_k).
\end{align}
The requirement $A+\mu B \succeq 0$ is then equivalent to
\begin{equation}\label{pt1}
\begin{split}
 (\lambda_i+\mu)B_i \succeq 0,   i=1,2,\ldots,k.
\end{split}
\end{equation}
Using \eqref{pt1} we compute $I_{\succeq}(A,B)$ as follows.
 \begin{thm}\label{dl1}
Suppose $A, B\in \mathcal{S}^n$ are  SDC and $B$ is  nonsingular.
\begin{enumerate}
 \item If $B \succ 0$ then  $I_{\succeq}(A,B)=[-\lambda_k, + \infty);$
 \item If $B \prec 0$ then  $I_{\succeq}(A,B)=(- \infty, -\lambda_1];$
\item If $B$ is indefinite  then
\begin{itemize}
\item[(i)] if  $B_1, B_2,\ldots,B_{t} \succ 0$  and $B_{t+1}, B_{t+2},\ldots, B_k \prec 0$ for some  $t \in \{1,2,\ldots,k\},$ then $I_{\succeq}(A,B)=[ -\lambda_t,-\lambda_{t+1}].$
\item [(ii)]  if  $B_1, B_2,\ldots, B_{t-1} \succ 0,$ $B_t$ is indefinite and $B_{t+1}, B_{t+2},\ldots, B_k \prec 0,$ then $I_{\succeq}(A,B)=\{-\lambda_t\},$
\item [(iii)] in other cases, that is either $B_i, B_j$ are  indefinite for some $i\ne j$  or $B_i \prec 0, B_j \succ 0$ for some $i<j$ or
$B_i $ is indefinite and $B_j \succ 0$ for some $i<j,$  then $I_{\succeq}(A,B)=\emptyset.$
\end{itemize}
 \end{enumerate}
\end{thm}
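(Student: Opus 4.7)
The plan is to take the block-diagonal reduction already obtained in \eqref{ct3} as the starting point and read off $I_{\succeq}(A,B)$ from the equivalent system \eqref{pt1}. Since $B$ is nonsingular and $P$ is nonsingular, $P^TBP=\texttt{diag}(B_1,\ldots,B_k)$ is nonsingular, hence each block $B_i$ is nonsingular. So each $B_i$ falls into exactly one of three mutually exclusive classes: positive definite, negative definite, or indefinite. The problem then reduces to intersecting, over $i=1,\ldots,k$, the set of $\mu$ for which $(\lambda_i+\mu)B_i\succeq0$.

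The first step is to record the elementary characterization of when a scalar multiple of a nonsingular symmetric matrix is positive semidefinite: $(\lambda_i+\mu)B_i\succeq0$ is equivalent to $\mu\ge -\lambda_i$ when $B_i\succ0$, to $\mu\le -\lambda_i$ when $B_i\prec0$, and to $\mu=-\lambda_i$ when $B_i$ is indefinite (because an indefinite $B_i$ has both positive and negative eigenvalues, and scaling by a nonzero number preserves indefiniteness). Using this, parts 1 and 2 follow immediately: if $B\succ0$ then every $B_i\succ0$, so $I_{\succeq}(A,B)=\bigcap_i[-\lambda_i,\infty)=[-\min_i\lambda_i,\infty)=[-\lambda_k,\infty)$ by the ordering $\lambda_1>\cdots>\lambda_k$; the case $B\prec0$ is symmetric.

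For part 3(i) I would intersect the constraints $\mu\ge -\lambda_i$ for $i\le t$ with $\mu\le -\lambda_i$ for $i>t$; the active ones are $\mu\ge -\lambda_t$ and $\mu\le -\lambda_{t+1}$, yielding the closed interval $[-\lambda_t,-\lambda_{t+1}]$, which is non-degenerate since $\lambda_t>\lambda_{t+1}$. For part 3(ii) the indefinite block $B_t$ forces $\mu=-\lambda_t$; then one verifies compatibility with the remaining definite blocks: for $i<t$ one needs $-\lambda_t\ge-\lambda_i$, i.e.\ $\lambda_i\ge\lambda_t$, which holds by the strict ordering; similarly for $i>t$. Hence $I_{\succeq}(A,B)=\{-\lambda_t\}$.

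The main technical step, and the one requiring the most care, is part 3(iii), which must be shown by systematically ruling out every configuration not covered by (i) and (ii). The plan is to derive a contradiction in each listed scenario from the rules above combined with the strict ordering of the $\lambda_i$: two indefinite blocks at $i\neq j$ would force $\mu=-\lambda_i$ and $\mu=-\lambda_j$ with $\lambda_i\neq\lambda_j$; a negative block at $i$ and a positive block at $j$ with $i<j$ would force $\mu\le -\lambda_i$ and $\mu\ge -\lambda_j$, hence $\lambda_j\ge\lambda_i$, contradicting $\lambda_i>\lambda_j$; an indefinite block at $i$ and a positive block at $j$ with $i<j$ would force $\mu=-\lambda_i\ge -\lambda_j$, again contradicting $\lambda_i>\lambda_j$; and the remaining combinations (indefinite then negative for $i>j$; etc.) are handled in the same way. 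The slight subtlety is to observe that these scenarios exhaust all indefinite-$B$ patterns falling outside (i) and (ii), so no additional case is missed.
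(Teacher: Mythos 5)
Your proposal is correct and follows essentially the same route as the paper: starting from the block decomposition \eqref{ct3}, reducing $A+\mu B\succeq 0$ to the scalar conditions \eqref{pt1}, and intersecting the constraints $\mu\ge-\lambda_i$, $\mu\le-\lambda_i$, or $\mu=-\lambda_i$ according to whether $B_i$ is positive definite, negative definite, or indefinite. Your explicit remark that each $B_i$ is nonsingular (hence the trichotomy is exhaustive) is a small but welcome addition of rigor over the paper's presentation; otherwise the case analysis, including the contradictions in part 3(iii), matches the paper's argument.
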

\begin{proof}

\begin{enumerate}
\item  If $B \succ 0$ then $B_i \succ 0 ~ \forall i=1,2,\ldots,k.$ The inequality \eqref{pt1} is then equivalent to
$\lambda_i+\mu \geq 0~ \forall i=1,2,\ldots, k.$
Since $\lambda_1>\lambda_2>\ldots>\lambda_k,$ we need only $\mu \geq -\lambda_k.$
This shows $I_{\succeq}(A,B)=[-\lambda_k, + \infty).$
\item Similarly, if  $B \prec 0$ then $B_i \prec 0~ \forall i=1,2,\ldots,k.$ The inequality \eqref{pt1} is then equivalent to
$\lambda_i+\mu \leq 0~ \forall i=1,2,\ldots, k.$ Then  $I_{\succeq}(A,B)=(- \infty, -\lambda_1].$
\item  The case $B$ is indefinite:
\begin{itemize}
\item [(i)] if $B_1, B_2,\ldots, B_{t} \succ 0$  and $B_{t+1}, B_{t+2},\ldots, B_k \prec 0$ for some  $t \in \{1,2,\ldots,k\},$
the inequality \eqref{pt1} then implies
$$ \begin{cases}  \lambda_i+\mu \geq 0, \forall i=1,2,\ldots, t,\\
\lambda_i+\mu \leq 0, \forall i=t+1,\ldots,k.
\end{cases}
$$
 Since $\lambda_1>\lambda_2>\ldots>\lambda_k,$  we have  $I_{\succeq}(A,B)=[ -\lambda_t,-\lambda_{t+1}].$
\item[(ii)] if  $B_1, B_2,\ldots, B_{t-1} \succ 0,$ $B_t$ is indefinite and $B_{t+1},B_{t+2},\ldots, B_k \prec 0$ for some
 $t \in \{1,2,\ldots,k\}.$   The inequality \eqref{pt1} then implies
$$ \begin{cases}  \lambda_i+\mu \geq 0, \forall i=1,2,\ldots, t-1\\
\lambda_t+\mu = 0\\
\lambda_i+\mu \leq 0, \forall i=t+1,\ldots,k.
\end{cases}
$$
 Since $\lambda_1>\lambda_2>\ldots>\lambda_k,$  we have  $I_{\succeq}(A,B)=\{-\lambda_t\}.$
\item [(iii)] if $B_i, B_j$ are  indefinite, \eqref{pt1} implies $\lambda_i+\mu=0$ and $\lambda_j+\mu=0.$ This cannot happen since
$\lambda_i\ne \lambda_j.$  If  $B_i \prec 0$ and $ B_j \succ 0$ for some $i<j,$ then
$$ \begin{cases}
 \lambda_i+\mu \leq 0\\
\lambda_j+\mu \geq 0
\end{cases}
$$
implying $-\lambda_j \leq \mu \leq -\lambda_i.$ This also cannot happen since
  $\lambda_i>\lambda_j.$  Finally, if  $B_i $ is indefinite and $B_j \succ 0$ for some $i<j.$ Again, by \eqref{pt1},
$$ \begin{cases}\label{pt2} \lambda_i+\mu = 0\\
\lambda_j+\mu \geq 0
\end{cases}
$$
implying $ \lambda_i \leq  \lambda_j.$ This also cannot happen.
So  $I_{\succeq}(A,B)=\emptyset$ in these all three cases.
\end{itemize}
\end{enumerate}
\end{proof}

The proof of Theorem \ref{dl1} indicates that if $A,B$ are SDC, $B$ is nonsingular and $I_{\succeq}(A,B)$ is an interval
then $I_{\succ}(A,B)$ is nonempty. In that case we have
  $I_{\succ}(A,B)={\rm int}(I_{\succeq}(A,B)),$ please see \cite{MJ}.
  If $B$ is singular and $A$ is nonsingular, we have the following result.

\begin{thm}\label{dl2}
Suppose $A, B\in \mathcal{S}^n$ are  SDC, $B$ is singular and $A$ is  nonsingular.  Then
\begin{enumerate}
\item[(i)] there always exists a nonsingular matrix $U$ such that
$$U^TBU=\texttt{diag}(B_1,0),$$
$$U^TAU=\texttt{diag}(A_1,A_3),$$
where  $B_1, A_1$ are symmetric of the same size, $B_1$ is nonsingular;
\item[(ii)] if $A_3\succ0$ then $I_{\succeq}(A,B)=I_{\succeq}(A_1,B_1).$ Otherwise,
 $I_{\succeq}(A,B)= \emptyset.$
 \end{enumerate}
\end{thm}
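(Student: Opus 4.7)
The plan is to build the decomposition in (i) directly from the SDC hypothesis by a simultaneous diagonalization followed by a permutation, and then to read (ii) off the block-diagonal structure together with the observation that nonsingularity of $A$ forces the bottom block to be nonsingular, so that $A_3\succeq 0$ collapses to $A_3\succ 0$.

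\textbf{Step 1 (part (i)).} By the SDC assumption, choose a nonsingular $Q$ so that both $Q^TAQ$ and $Q^TBQ$ are diagonal. Since $B$ is singular, some diagonal entries of $Q^TBQ$ vanish; let $\Pi$ be a permutation matrix that moves exactly those zero entries to the bottom-right. Set $U=Q\Pi$. Then
\begin{equation*}
U^TBU=\Pi^T(Q^TBQ)\Pi=\texttt{diag}(B_1,0),
\qquad
U^TAU=\Pi^T(Q^TAQ)\Pi=\texttt{diag}(A_1,A_3),
\end{equation*}
where $B_1$ is the diagonal matrix formed by the nonzero diagonal entries of $Q^TBQ$, and $A_1,A_3$ are the corresponding diagonal blocks of the permuted $Q^TAQ$. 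By construction $B_1$ is diagonal and nonsingular, and $A_1,B_1$ have the same size, which gives (i).

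\textbf{Step 2 (nonsingularity of $A_3$).} Since $A$ is nonsingular and $U$ is nonsingular, $U^TAU=\texttt{diag}(A_1,A_3)$ is also nonsingular; hence both $A_1$ and $A_3$ are nonsingular. In particular $A_3$ has no zero eigenvalue, so $A_3\succeq 0$ is equivalent to $A_3\succ 0$.

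\textbf{Step 3 (part (ii)).} Positive semidefiniteness is preserved by congruence, so
\begin{equation*}
A+\mu B\succeq 0 \iff U^T(A+\mu B)U=\texttt{diag}(A_1+\mu B_1,\,A_3)\succeq 0.
\end{equation*}
A block-diagonal symmetric matrix is PSD iff each diagonal block is PSD, so this splits into $A_1+\mu B_1\succeq 0$ together with $A_3\succeq 0$. If $A_3\succ 0$, the second condition is automatic, and the first defines exactly $I_{\succeq}(A_1,B_1)$, yielding $I_{\succeq}(A,B)=I_{\succeq}(A_1,B_1)$. Otherwise, by Step 2, $A_3$ is not PSD, so the second condition fails for every $\mu$, giving $I_{\succeq}(A,B)=\emptyset$.

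The proof is essentially bookkeeping: there is no real obstacle, only the need to observe that nonsingularity of $A$ automatically rules out the borderline case $A_3\succeq 0$ but $A_3\not\succ 0$, which is why the theorem can state its dichotomy without a middle case.
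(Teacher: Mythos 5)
Your proof is correct, but it takes a genuinely different and much shorter route than the paper. You use the SDC hypothesis head-on: take a congruence $Q$ that diagonalizes $A$ and $B$ simultaneously, permute the zero diagonal entries of $Q^TBQ$ to the bottom, and read off the block structure; nonsingularity of $A$ then makes $A_3$ nonsingular, which is exactly what rules out the borderline case in (ii). The paper never invokes an explicit simultaneous diagonalizer. It builds $U$ constructively from an orthogonal reduction of $B$ to $\texttt{diag}(B_1,0)$, an orthogonal diagonalization of the lower-right block $M_3$, and a Schur-complement-type elimination (producing $A_1=M_1-A_4A_3^{-1}A_4^T$), and it uses SDC only inside two contradiction arguments: first to show $M_3\neq0$, and then to show that the residual coupling block $A_5$ and the extra zero block cannot occur (the ``$q=r$'' step). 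What your argument buys is brevity and transparency, with the bonus that $A_1,B_1,A_3$ come out diagonal; what the paper's argument buys is a reduction assembled from orthogonal transforms plus one elimination step (no SDC congruence needs to be computed, SDC is only a certificate), a $B_1$ tied directly to the nonsingular part of an orthogonal reduction of $B$, and a line of reasoning that parallels the machinery the paper reuses for the case where both matrices are singular (Lemmas~\ref{lem1}--\ref{lem2} and Theorem~\ref{thm5}). Part (ii) is handled essentially identically in both proofs.
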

\begin{proof} (i) Since $B$ is symmetric and singular, there is an orthogonal matrix $Q_1$ that puts $B$ into the form
$$ \hat{B} =Q_1^TBQ_1=\texttt{diag}(B_1, 0)$$
such that $B_1$ is a nonsingular symmetric matrix of size $p\times p,$ where $p={\rm rank}(B).$
Let $\hat{A} :=Q_1^TAQ_1.$ Since $A, B$ are SDC,  $\hat{A}, \hat{B}$ are  SDC too (the converse also holds true).
We can write $\hat{A}$ in the following form
\begin{align}\label{b}
\hat{A}=Q_1^TAQ_1=\left(\begin{matrix}M_1&M_2\\M_2^T&M_3\end{matrix}\right)
\end{align}  such that
			$M_1$ is a symmetric matrix of  size $ p\times p,$ $M_2$ is a $p \times(n-p)$ matrix,
$M_3$ is symmetric of size $(n-p)\times(n-p) $ and, importantly,  $M_3\ne0.$ Indeed,
if  $M_3=0$ then $\hat{A}=Q_1^TAQ_1=\left(\begin{matrix}M_1&M_2\\ M_2^T&0\end{matrix}\right).$
Then we can choose a nonsingular matrix $H$ written in the same partition as $\hat{A}:$ $H=\left(
\begin{matrix}
	H_1&H_2\\ H_3&H_4
\end{matrix}
\right)$ such that both $H^T\hat{B}H, H^T\hat{A}H$ are diagonal and
$H^T\hat{B}H$   is of the form
 \begin{align*}
H^T\hat{B} H= \left(\begin{matrix}H_1^T B_1H_1&H_1^T B_1H_2\\
		H_2^T B_1H_1&H_2^T B_1H_2\\
		\end{matrix}\right)
= \left(\begin{matrix}H_1^T B_1H_1&0\\
		0&0\\
		\end{matrix}\right),
\end{align*}
where $H_1^T B_1H_1$ is nonsingular.  This implies
  $H_2=0.$ On the other hand,
 \begin{align*}
 H^T\hat{A} H=\left(\begin{matrix}H_1^T M_1H_1+H_3^TM_2^TH_1+H_1^TM_2H_3&H_1^TM_2H_4\\H_4^TM_2^TH_1&0\\
		\end{matrix}\right)
\end{align*}
is diagonal implying that $H_1^TM_2H_4=0,$ and so
\begin{align*}
H^T\hat{A} H=\left(\begin{matrix}H_1^T M_1H_1+H_3^TM_2^TH_1+H_1^TM_2H_3&0\\0&0\\
		\end{matrix}\right).
\end{align*}
This cannot happen since  $\hat{A}$ is nonsingular.

Let  $P$ be an orthogonal matrix such that   $P^TM_3P=\texttt{diag}(A_3,0_{q-r}),$
where $A_3$ is a nonsingular diagonal matrix of size $r\times r, r\leq q$ and $p+q=n,$ and set
			$U_1=\texttt{diag}(I_p, P).$ We then have
\begin{align}\label{c}
\tilde{A}: =U_1^T\hat{A}U_1
=\left(
\begin{matrix}
	M_1&M_2P\\ (M_2P)^T& P^TM_3P
\end{matrix}
\right)
=\left(
\begin{matrix}
	M_1&A_4&A_5\\ A_4^T&A_3&0\\ A_5^T&0&0
			\end{matrix}
		\right),
 \end{align}
 where $\left(\begin{matrix}A_4 & A_5\end{matrix}\right)=M_2P,$  $A_4$ and $ A_5$ are of size  $p\times r$ and $p\times(q-r), r\le q,$ respectively.
 Let
\begin{align*}
U_2=\left(\begin{matrix}I_p&0&0\\-A_3^{-1}A_4^T&I_r&0\\0&0&I_{q-r}
			\end{matrix}\right) \text{  and  }  U=Q_1U_1U_2.
\end{align*}
We can verify that
\begin{align*}
U^TBU=U_2^TU_1^T (Q_1^TBQ_1)) U_1U_2=\hat B,
\end{align*}
and, by \eqref{c},
\begin{align*}
U^TAU=U_2^T\tilde A U_2=\left(
\begin{matrix}
	M_1-A_4A_3^{-1}A_4^T&0&A_5\\0&A_3&0\\A_5^T&0&0
\end{matrix}
\right).
\end{align*}
We denote $A_1:=M_1-A_4A_3^{-1}A_4^T$ and rewrite the matrices as follows
\begin{align*}
U^TBU=\texttt{diag}(B_1,0),  U^TAU=\left( \begin{matrix} A_1&0&A_5\\
0&A_3&0\\ A_5^T&0&0\end{matrix}\right).
\end{align*}
We now consider whether it can happen that $r<q.$
We note that $U^TAU, U^TBU$ are SDC.  We can choose a nonsingular congruence matrix $K$ written in the form
\begin{align*}
K=\left(
\begin{matrix}
	K_1&K_2&K_3\\ K_4&K_5&K_6\\ K_7&K_8&K_9
\end{matrix}
\right)
\end{align*}
such that not only the matrices  $K^TU^TAUK, K^TU^TBUK$ are diagonal but also
$ K^TU^TBUK$ is remained a $p\times p$ nonsingular submatrix at the northwest corner. That is
\begin{align*}
K^TU^T BU K=\left(\begin{matrix}K_1^T B_1K_1&K_1^T B_1K_2&K_1^T B_1K_3\\
		K_2^T B_1K_1&K_2^T B_1K_2&K_2^T B_1K_3\\K_3^TB_1K_1&K_3^TB_1K_2&K_3^TB_1K_3
		\end{matrix}\right)=\left(\begin{matrix}K_1^T B_1K_1&0&0\\
		0&0&0\\0&0&0
		\end{matrix}\right)
\end{align*}
is diagonal and  $K_1^T B_1K_1$ is nonsingular diagonal of size $p\times p.$  This
 implies that $K_2=K_3=0.$   
 Then
\begin{align*}
& K^TU^T AU K=\\
&=\left(\begin{matrix}\substack{K_1^T A_1K_1+K_1^T A_2K_7 \\ +K_4^TA_3K_4+K_7^TA_2^TK_1}&K_1^TA_2K_8+K_4^TA_3K_5&K_1^TA_2K_9+K_4^TA_3K_6\\K_8^TA_2^TK_1+K_5^TA_3^TK_4&K_5^TA_3K_5&K_5^TA_3K_6\\K_9^TA_2^TK_1+K_6^TA_3^TK_4&K_6^TA_3K_5&K_6^TA_3K_6
		\end{matrix}\right) \\
&= \left(\begin{matrix}K_1^T A_1K_1+K_1^T A_2K_7+K_4^TA_3K_4+K_7^TA_2^TK_1&0&0\\0&K_5^TA_3K_5&0\\0&0&K_6^TA_3K_6
		\end{matrix}\right)
\end{align*}
	 is diagonal implying that 
$$K_1^T A_1K_1+K_1^T A_2K_7+K_4^TA_3K_4+K_7^TA_2^TK_1, K_5^TA_3K_5,
K_6^TA_3K_6$$ are diagonal.
Note that  $U^TAU$ is nonsingular, $K_5^TA_3K_5, K_6^TA_3K_6$ must be nonsingular. But then
 $K_5^TA_3K_6=0$ with $A_3$ nonsingular  is a contradiction. It therefore  holds that $q=r.$ Then
\begin{align*} U^TBU=\texttt{diag}(B_1, 0) , U^TAU=\texttt{diag}(A_1, A_3)
\end{align*}
with $B_1, A_1, A_3$ as desired.

(ii) We note first that $A$ is nonsingular so is $A_3.$ If $A_3 \succ 0,$ then  $A+\mu B \succeq 0$ if and only if  $ A_1+\mu B_1 \succeq 0.$
So it holds in that case  $I_{\succeq}(A,B)=I_{\succeq}(A_1,B_1).$ Otherwise, $A_3$
is either indefinite or negative definite then  $I_{\succeq}(A,B)= \emptyset.$
\end{proof}

The proofs of Theorems \ref{dl1} and \ref{dl2} reveal the following important result.
\begin{cor}\label{cor1}
Suppose $A, B\in\mathcal{S}^n$ are SDC and either $A$ or $B$ is nonsingular.
Then $I_{\succ}(A,B)$ is nonempty if and only if $I_{\succeq}(A,B)$ has more than one point.
\end{cor}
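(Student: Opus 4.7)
The plan is to verify the equivalence by a case split according to whether $B$ is nonsingular or $A$ is, and then in each case to read off the conclusion from the preceding theorems.

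First I would dispose of the case when $B$ is nonsingular by a direct inspection of the five subcases of Theorem \ref{dl1}. In subcases (1), (2), and 3(i) the interval $I_{\succeq}(A,B)$ is of the form $[-\lambda_k,+\infty)$, $(-\infty,-\lambda_1]$, or $[-\lambda_t,-\lambda_{t+1}]$; because the $\lambda_i$ are pairwise distinct each such interval has more than one point, and one checks immediately from the decomposition \eqref{ct3} that any interior point $\mu$ gives $\lambda_i+\mu\ne 0$ with the correct sign on every block, hence $A+\mu B\succ 0$, so $I_{\succ}(A,B)$ is nonempty. Conversely, in subcase 3(ii) the set $I_{\succeq}(A,B)$ is the singleton $\{-\lambda_t\}$, and at $\mu=-\lambda_t$ the $t$-th block $(\lambda_t+\mu)B_t=0$ is singular, so $A+\mu B$ is not positive definite; in subcase 3(iii) the set is empty. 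In both of these last situations $I_{\succ}(A,B)=\emptyset$ and $I_{\succeq}(A,B)$ fails to have more than one point, which completes the equivalence when $B$ is nonsingular.

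Second I would reduce the case in which $B$ is singular (and $A$ is nonsingular) to the previous case by means of Theorem \ref{dl2}. Write $U^TBU=\texttt{diag}(B_1,0)$ and $U^TAU=\texttt{diag}(A_1,A_3)$ with $B_1$ nonsingular and $A_3$ nonsingular (the latter because $A$ is). If $A_3\not\succ 0$, Theorem \ref{dl2}(ii) gives $I_{\succeq}(A,B)=\emptyset$, and since the $A_3$-block is independent of $\mu$ we also have $I_{\succ}(A,B)=\emptyset$, so both sides of the equivalence fail trivially. If $A_3\succ 0$, then $A+\mu B\succeq 0$ (resp.\ $\succ 0$) if and only if $A_1+\mu B_1\succeq 0$ (resp.\ $\succ 0$), so $I_{\succeq}(A,B)=I_{\succeq}(A_1,B_1)$ and $I_{\succ}(A,B)=I_{\succ}(A_1,B_1)$. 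Since $B_1$ is nonsingular and $A_1,B_1$ are SDC (inherited from $A,B$), the first case applies to the pair $(A_1,B_1)$ and yields the desired equivalence.

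There is no genuine obstacle; the only point that deserves care is the small verification that congruence preserves strict definiteness and that the decoupled $A_3$-block in Theorem \ref{dl2} makes $I_{\succ}$ inherit from $I_{\succ}(A_1,B_1)$ exactly when $A_3\succ 0$. Everything else is just bookkeeping over the subcases of Theorem \ref{dl1}.
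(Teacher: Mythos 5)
Your argument is correct and follows essentially the route the paper intends: read the equivalence off the case analysis of Theorem \ref{dl1} (interior points of the intervals in cases 1, 2, 3(i) give positive definiteness, while cases 3(ii), 3(iii) leave $I_{\succeq}(A,B)$ a singleton or empty and $I_{\succ}(A,B)$ empty), and reduce the singular-$B$ case to it via Theorem \ref{dl2}. The only step you assert without justification is that $A_1,B_1$ are SDC (``inherited from $A,B$''): SDC does not pass to a diagonal block for free, and here it should be backed either by Lemma \ref{lem2}(ii) (after first diagonalizing $B_1$ by congruence) or, more economically, by noting that in the only direction where you need it $I_{\succeq}(A_1,B_1)=I_{\succeq}(A,B)$ has more than one point, so Lemma \ref{dlgiaosu} already forces $A_1,B_1$ to be SDC, while the converse direction needs no reduction at all since a nonempty $I_{\succ}(A,B)$ is open and hence makes $I_{\succeq}(A,B)$ contain more than one point.
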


  If $A, B$ are both singular, they can be decomposed to either of the following form.
 \begin{lem}[\cite{Ngan}]\label{lem1}
	For any $A, B\in \mathcal{S}^n,$ there always exists a nonsingular matrix $U$ that
 puts $B$ to
\begin{align*}
		\bar{B}=U^TBU
		=\left(
		\begin{matrix}
			B_1&0_{p\times r}\\ 0_{r\times p}&0_{r\times r} \end{matrix}
		\right)
		\end{align*}
such that $ B_1$ is nonsingular diagonal of size  $p\times p,$ and puts  $A$ to $\bar{A}$
of  either form
\begin{align}\label{a1}
				\bar{A}=U^TAU
		=\left(
		\begin{matrix}
			A_1& A_2\\ A^T_2&0_{r\times r}
		\end{matrix}
		\right)
		\end{align}
or
		\begin{align}\label{a3}	
		\bar{A}=U^TAU
		=\left(
		\begin{matrix}
			A_1&0_{p\times s}& A_2\\ 0_{s\times p}&A_3&0_{s\times (r-s)}\\
		A_2^T&0_{(r-s)\times s}& 0_{(r-s)\times (r-s)}
		\end{matrix}
		\right),
		\end{align}
		where 		$ A_1$ is symmetric of dimension $p\times p,$
		$A_2$ is a $p\times (r-s)$ matrix, and $A_3$ is a   nonsingular diagonal matrix of dimension $s\times s;$
$p, r, s\ge0, p+r=n.$		
	\end{lem}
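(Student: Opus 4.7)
The plan is to build the desired $U$ as a product of three congruences, each fixing one structural feature at a time while leaving the previous arrangement intact. \emph{Step 1 (reduce $B$).} Since $B$ is real symmetric of rank $p$, an orthogonal $Q_1$ produces $Q_1^T B Q_1 = \texttt{diag}(B_1, 0_{r\times r})$ with $B_1$ nonsingular diagonal and $r=n-p$. Writing
\[
Q_1^T A Q_1 = \begin{pmatrix} M_1 & M_2 \\ M_2^T & M_3 \end{pmatrix}, \qquad M_1 \in \mathcal{S}^p,\; M_3 \in \mathcal{S}^r,
\]
I case-split on $M_3$: if $M_3=0$ I stop with $U=Q_1$, $A_1=M_1$, $A_2=M_2$, which already exhibits form \eqref{a1}.

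\emph{Step 2 (reduce $M_3$).} If $M_3 \ne 0$, orthogonally diagonalize $P^T M_3 P = \texttt{diag}(A_3, 0_{(r-s)\times(r-s)})$ with $A_3$ nonsingular diagonal of size $s=\mathrm{rank}(M_3)$. Applying $U_1=\texttt{diag}(I_p,P)$ preserves the block form of $B$ (since $P^T 0 P = 0$) and, writing $M_2 P = (N_1\;N_2)$ with $N_1$ of size $p\times s$ and $N_2$ of size $p\times(r-s)$, sends $A$ to
\[
\begin{pmatrix} M_1 & N_1 & N_2 \\ N_1^T & A_3 & 0 \\ N_2^T & 0 & 0 \end{pmatrix}.
\]
\emph{Step 3 (Schur decoupling).} The nonsingularity of $A_3$ permits the block lower-triangular shift
\[
U_2 = \begin{pmatrix} I_p & 0 & 0 \\ -A_3^{-1} N_1^T & I_s & 0 \\ 0 & 0 & I_{r-s} \end{pmatrix},
\]
whose only nonidentity entries sit in rows where $B$ vanishes, so $\texttt{diag}(B_1,0,0)$ is unchanged, while the middle-column $N_1$ of $A$ is eliminated, producing form \eqref{a3} with $A_1 = M_1 - N_1 A_3^{-1} N_1^T$ (symmetric since $A_3$ is diagonal) and $A_2 = N_2$. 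I then take $U = Q_1 U_1 U_2$.

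The main obstacle is the bookkeeping discipline: once $B$ is in the form $\texttt{diag}(B_1,0)$ with $B_1$ nonsingular, any subsequent congruence that preserves this form must be block lower-triangular with $I_p$ in the top-left block, and both $U_1$ and $U_2$ are designed precisely inside this class. The Schur step is well defined thanks to the nonsingularity of $A_3$ produced in Step~2, and the degenerate subcases need no separate argument: $s=0$ collapses Step~3 back onto form \eqref{a1}, while $s=r$ makes the $N_2$ block empty and reduces form \eqref{a3} to block diagonal.
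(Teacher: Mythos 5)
Your construction is correct and is essentially the paper's own approach: Lemma \ref{lem1} is only cited from \cite{Ngan} without proof here, but the identical three-step congruence chain—orthogonal reduction of $B$ to $\texttt{diag}(B_1,0)$, then $U_1=\texttt{diag}(I_p,P)$ to diagonalize the trailing block of $A$ as $\texttt{diag}(A_3,0)$, then the Schur-type elimination $U_2$ with $-A_3^{-1}N_1^T$ which leaves $B$'s form untouched—appears almost verbatim in the paper's proof of Theorem \ref{dl2}(i). Your case split on $M_3=0$ versus $M_3\ne0$ correctly yields forms \eqref{a1} and \eqref{a3}, so there is nothing to add.
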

It is easy to verify that $A, B$ are SDC if and only if $\bar{A}, \bar{B}$ are SDC.
\begin{lem}[\cite{Ngan}]\label{lem2}
\begin{enumerate}
\item[i)]  If  $\bar{A}$  takes the form  \eqref{a1} then $\bar{B}, \bar{A}$  are SDC if and only if  $B_1, A_1$
are SDC and $A_2=0;$
\item[ii)] If $\bar{A}$  take the form  \eqref{a3} then $\bar{B}, \bar{A}$  are SDC if and only if  $B_1, A_1$
are SDC and $A_2=0$ or does not exist, i.e., $s=r.$
\end{enumerate}
\end{lem}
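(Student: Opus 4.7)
The plan is to prove each of (i) and (ii) by establishing the ``if'' and ``only if'' directions. The ``if'' direction is a direct construction: given a nonsingular $P_1\in\mathbb{R}^{p\times p}$ simultaneously diagonalizing $A_1,B_1$ via congruence, with $A_2=0$ (or, in (ii), $s=r$ so $A_2$ is absent), the matrix $U=\texttt{diag}(P_1,I_r)$ in case (i) and $U=\texttt{diag}(P_1,I_s,I_{r-s})$ in case (ii) simultaneously diagonalizes $\bar A$ and $\bar B$, since $A_3$ in (ii) is already diagonal.

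For the ``only if'' direction, suppose $P$ is nonsingular with $P^T\bar{B}P$ and $P^T\bar{A}P$ both diagonal. The first step is to locate $\ker\bar{B}$ among the columns of $P$. Since $\bar{B}$ has rank $p$ and $P^T\bar{B}P$ is diagonal of rank $p$, after a column permutation (which preserves diagonality) one may assume the last $r$ diagonal entries of $P^T\bar{B}P$ vanish. Partitioning $P=[P_1\mid P_2]$ with $P_2\in\mathbb{R}^{n\times r}$, the vanishing of the last $r$ rows and columns of $P^T\bar{B}P$ gives $P^T\bar{B}P_2=0$, hence $\bar{B}P_2=0$. So the columns of $P_2$ span $\ker\bar{B}$; in case (i) this yields $P_2=\bigl(\begin{smallmatrix}0\\ P_2^{(2)}\end{smallmatrix}\bigr)$ with $P_2^{(2)}$ nonsingular $r\times r$, and in case (ii) $P_2=\bigl(\begin{smallmatrix}0\\ P_2^{(2)}\\ P_2^{(3)}\end{smallmatrix}\bigr)$ with $\bigl(\begin{smallmatrix}P_2^{(2)}\\ P_2^{(3)}\end{smallmatrix}\bigr)$ nonsingular. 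Nonsingularity of $P$ then forces the top $p\times p$ block $P_1^{(1)}$ of $P_1$ to be nonsingular.

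In case (i), the off-diagonal block of $P^T\bar{A}P$ reads $P_1^T\bar{A}P_2=(P_1^{(1)})^TA_2P_2^{(2)}$; its vanishing with $P_1^{(1)}$ and $P_2^{(2)}$ nonsingular forces $A_2=0$. Then the top-left block of $P^T\bar{A}P$ equals $(P_1^{(1)})^TA_1P_1^{(1)}$, which is diagonal, and $(P_1^{(1)})^TB_1P_1^{(1)}$ is likewise diagonal, so $P_1^{(1)}$ witnesses SDC of $A_1$ and $B_1$. Case (ii) with $s=r$ is immediate since $A_2$ is absent and the argument applies verbatim to the first two block-rows.

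The main obstacle is case (ii) with $s<r$: the off-diagonal block now reads
\[
(P_1^{(1)})^T A_2 P_2^{(3)} + (P_1^{(2)})^T A_3 P_2^{(2)} = 0,
\]
so $A_2$ is entangled with the free block $P_1^{(2)}$ and with $A_3$. To disentangle, I would use a rank argument: the quadratic form $v\mapsto v^T\bar{A}v$ restricted to $\ker\bar{B}$ intrinsically equals $v_2^TA_3v_2$ and has rank $s$; read in the basis of $\ker\bar{B}$ given by $P_2$, this form has matrix $(P_2^{(2)})^TA_3P_2^{(2)}$, so that matrix has rank $s$, which forces $P_2^{(2)}$ to have full row rank $s$. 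Since $(P_2^{(2)})^TA_3P_2^{(2)}$ is moreover diagonal, exactly $r-s$ of its diagonal entries are zero; for each such index $j$, the $j$-th column vanishes, and combined with full row rank of $P_2^{(2)}$ and invertibility of $A_3$, this forces the $j$-th column of $P_2^{(2)}$ itself to be zero. Nonsingularity of $\bigl(\begin{smallmatrix}P_2^{(2)}\\ P_2^{(3)}\end{smallmatrix}\bigr)$ then forces the $r-s$ columns of $P_2^{(3)}$ at those same indices to form a nonsingular $(r-s)\times(r-s)$ block $M$. Restricting the displayed off-diagonal equation to these columns kills the $A_3$-term and leaves $(P_1^{(1)})^TA_2\,M=0$ with $M$ nonsingular, forcing $A_2=0$; the case-(i) reduction then delivers SDC of $A_1$ and $B_1$.
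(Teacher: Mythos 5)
The paper itself gives no proof of this lemma (it is quoted from \cite{Ngan}), so your argument has to be judged on its own terms. Most of it is sound: the ``if'' direction, locating $\ker\bar{B}$ via the zero diagonal entries of $P^T\bar{B}P$, the nonsingularity of $P_1^{(1)}$ and of the stacked block $\bigl(\begin{smallmatrix}P_2^{(2)}\\ P_2^{(3)}\end{smallmatrix}\bigr)$, all of case (i), and---the real content---the rank argument: $P_2^{(2)}$ has full row rank $s$, its columns indexed by the zero diagonal entries of the diagonal matrix $(P_2^{(2)})^TA_3P_2^{(2)}$ vanish, the corresponding columns of $P_2^{(3)}$ form a nonsingular $M$, and the off-diagonal identity restricted to those columns gives $(P_1^{(1)})^TA_2M=0$, hence $A_2=0$. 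That part is correct.

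The gap is in the final step of case (ii), both where you dismiss $s=r$ as ``verbatim'' and where you say ``the case-(i) reduction then delivers SDC of $A_1$ and $B_1$.'' The case-(i) computation identifying the top-left $p\times p$ block of $P^T\bar{A}P$ with $(P_1^{(1)})^TA_1P_1^{(1)}$ uses that the trailing block of $\bar{A}$ is zero; in case (ii) that block is $\mathrm{diag}(A_3,0)\neq 0$, so the top-left block is $(P_1^{(1)})^TA_1P_1^{(1)}+(P_1^{(2)})^TA_3P_1^{(2)}$, and its diagonality says nothing about $(P_1^{(1)})^TA_1P_1^{(1)}$ alone. So case (i) cannot simply be invoked. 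The missing step is short and uses only your own ingredients: once $A_2=0$ (or $A_2$ is absent), the off-diagonal identity reduces to $(P_1^{(2)})^TA_3P_2^{(2)}=0$; since $P_2^{(2)}$ has full row rank (for $s=r$ it is nonsingular outright) and $A_3$ is nonsingular, this forces $P_1^{(2)}=0$, after which the cross term vanishes and $P_1^{(1)}$ simultaneously diagonalizes $A_1$ and $B_1$. With that extra line the proof is complete.
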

 Now suppose that $A, B$ are SDC, Lemmas \ref{lem1} and \ref{lem2} allow to assume without loss of generality
 that $\bar{B}, \bar{A}$  are already SDC. That is
   \begin{align}\label{pt3}
\bar{B}=U^TBU=\texttt{diag}(B_1, 0),
\bar{A}=U^TAU=\texttt{diag}(A_1, 0)
\end{align}   or
\begin{align}\label{pt4}
\bar{B}=U^TBU=\texttt{diag}(B_1, 0),
\bar{A}=U^TAU=\texttt{diag}(A_1, A_4),
\end{align}
where $A_1, B_1$ are of the same size and diagonal, $B_1$ is nonsingular and
if $\bar{A}$ takes the form \eqref{a1} or \eqref{a3} and $A_2=0$ then
$A_4={\rm diag}(A_3,0)$   or if $\bar{A}$ takes the form \eqref{a3} and  $A_2$ does not exist then $A_4=A_3.$
  Now we can compute $I_{\succeq}(A,B)$ as follows.
\begin{thm}\label{thmb}
\begin{itemize}
\item [(i)] If $\bar{B}, \bar{A}$ take the from $(\ref{pt3}),$ then $I_{\succeq}(A,B)=I_{\succeq}(A_1,B_1);$
\item [(ii)] If $\bar{B}, \bar{A}$ take the from $(\ref{pt4}),$ then $I_{\succeq}(A,B)=I_{\succeq}(A_1,B_1)$
if $A_4 \succeq 0$ and  $I_{\succeq}(A,B)=\emptyset$ otherwise.
\end{itemize}
\end{thm}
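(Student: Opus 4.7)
The plan is to exploit two elementary facts: (a) congruence by a nonsingular matrix preserves positive semidefiniteness, and (b) a block diagonal symmetric matrix is positive semidefinite if and only if each diagonal block is. Since $U$ is nonsingular, we have the chain of equivalences
\begin{align*}
A+\mu B \succeq 0 \iff U^T(A+\mu B)U \succeq 0 \iff \bar{A}+\mu\bar{B} \succeq 0,
\end{align*}
so in both cases it suffices to analyze $\bar{A}+\mu\bar{B}$.

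For part (i), substituting the expressions from \eqref{pt3} gives
\begin{align*}
\bar{A}+\mu\bar{B}=\texttt{diag}(A_1+\mu B_1,\, 0).
\end{align*}
Since the zero block is trivially positive semidefinite, this block diagonal matrix is positive semidefinite if and only if $A_1+\mu B_1\succeq 0$. This yields $I_{\succeq}(A,B)=I_{\succeq}(A_1,B_1)$ immediately.

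For part (ii), using \eqref{pt4} we obtain
\begin{align*}
\bar{A}+\mu\bar{B}=\texttt{diag}(A_1+\mu B_1,\, A_4),
\end{align*}
which is positive semidefinite if and only if both $A_1+\mu B_1\succeq 0$ and $A_4\succeq 0$ hold simultaneously. Because $A_4$ is independent of $\mu$, two cases arise: if $A_4\not\succeq 0$ then no $\mu$ can make the full pencil positive semidefinite, so $I_{\succeq}(A,B)=\emptyset$; if $A_4\succeq 0$ then the condition reduces to $A_1+\mu B_1\succeq 0$, giving $I_{\succeq}(A,B)=I_{\succeq}(A_1,B_1)$.

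There is essentially no technical obstacle here; the result is a direct consequence of the block diagonal decomposition established in Lemmas \ref{lem1} and \ref{lem2}, combined with the fact that $U$ is nonsingular. The only mild point worth stating explicitly is that $A_4$ does not depend on $\mu$, which is what forces the dichotomy in part (ii).
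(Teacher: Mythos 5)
Your proof is correct, and it is exactly the argument the paper leaves implicit: the theorem is stated without proof there, being an immediate consequence of congruence by the nonsingular $U$ preserving semidefiniteness and a block diagonal matrix being positive semidefinite iff each block is. Your explicit remark that $A_4$ is independent of $\mu$ is the right point to flag, and nothing further is needed.
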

We note that $B_1$ is nonsingular, $I_{\succeq}(A_1,B_1)$ is therefore computed by Theorem \ref{dl1}. Especially,
if $I_{\succeq}(A_1,B_1)$ has more than one point, then $I_{\succ}(A_1,B_1)\ne\emptyset,$ see Corollary \ref{cor1}.

In the rest of this section we consider $I_{\succeq}(A,B)$  when  $A, B$ are not SDC.
We need first to show that if $A, B$ are not SDC, then $I_{\succeq}(A,B)$ either is empty or has only one point.
The proof of Lemma \ref{1} is easy, we omit it.
\begin{lem}\label{1}
If $A, B$ are  positive semidefinite then $A, B$ are SDC.
\end{lem}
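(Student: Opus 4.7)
My plan is to split the argument into two cases according to whether $A+B$ is nonsingular, and in each case reduce the task to the spectral theorem applied to a single symmetric matrix.

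In the nonsingular case, $A+B \succ 0$, so I would set $C := (A+B)^{1/2}$, which is symmetric and invertible, and then study $\tilde{A} := C^{-1}AC^{-1}$. This symmetric matrix admits an orthogonal diagonalization $Q^T\tilde{A}Q = D$, and because $\tilde{A} + \tilde{B} = I$ the same $Q$ automatically diagonalizes $\tilde{B} = I - \tilde{A}$. Setting $P := C^{-1}Q$ then makes both $P^TAP$ and $P^TBP$ diagonal, which is the SDC conclusion.

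The singular case I would reduce to the previous one by restricting to the orthogonal complement of $\ker A \cap \ker B$. The crucial observation, which uses positive semidefiniteness of both summands, is that $\ker(A+B) = \ker A \cap \ker B$: if $x^T(A+B)x = 0$ and $A, B \succeq 0$, then $x^T A x = x^T B x = 0$ individually, which via $x^T A x = \|A^{1/2}x\|^2$ forces $Ax = Bx = 0$. I would then choose an orthogonal $U$ whose last $n - r$ columns span this common kernel, where $r = {\rm rank}(A+B)$. The block structure $AU_2 = BU_2 = 0$ makes $U^T A U$ and $U^T B U$ simultaneously block-diagonal with a zero $(n-r)\times(n-r)$ bottom-right block, while the top-left blocks $A', B' \in \mathcal{S}^r$ remain positive semidefinite with $A' + B' \succ 0$. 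Applying the first case to $A', B'$ yields a nonsingular $P_1$ with $P_1^T A' P_1$ and $P_1^T B' P_1$ both diagonal, and finally $P := U\cdot\texttt{diag}(P_1, I_{n-r})$ simultaneously diagonalizes $A$ and $B$ by congruence.

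The only step requiring real care is the identity $\ker(A+B) = \ker A \cap \ker B$; this is what distinguishes positive semidefinite pairs from general symmetric ones and underlies the whole reduction to the nonsingular case. Everything else amounts to the spectral theorem for symmetric matrices plus routine block-matrix bookkeeping, which is presumably why the authors call the proof ``easy'' and omit it.
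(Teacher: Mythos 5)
Your proof is correct: the congruence by $(A+B)^{-1/2}$ reduces the problem to orthogonally diagonalizing a single symmetric matrix, and the identity $\ker(A+B)=\ker A\cap\ker B$ for positive semidefinite summands correctly handles the singular case, so the block reduction and the final congruence matrix $P=U\,\texttt{diag}(P_1,I_{n-r})$ all go through. The paper itself omits the proof of this lemma (calling it easy), so there is nothing to compare against; your argument is the standard one the authors presumably had in mind, and it fills the omission completely.
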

\begin{lem}\label{dlgiaosu}
If $A, B\in \mathcal{S}^n$ are not SDC then  $I_{\succeq}(A,B)$ either  is empty or has only one element.
\end{lem}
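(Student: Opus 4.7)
The plan is to argue by contrapositive: if $I_{\succeq}(A,B)$ contains two distinct points, then $A,B$ must already be SDC, which contradicts the hypothesis of the lemma.

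Suppose for contradiction that there exist $\mu_1,\mu_2\in I_{\succeq}(A,B)$ with $\mu_1\ne\mu_2$. Set
\[
M_1:=A+\mu_1 B,\qquad M_2:=A+\mu_2 B,
\]
both of which are symmetric and positive semidefinite by assumption. The first step is to invoke Lemma~\ref{1} to conclude that $M_1,M_2$ are SDC, i.e.\ there exists a nonsingular matrix $P$ such that both $P^TM_1P$ and $P^TM_2P$ are diagonal.

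The next step is a purely algebraic observation: because $\mu_1\ne\mu_2$, the pair $(A,B)$ is recovered from $(M_1,M_2)$ by the linear relations
\[
B=\frac{1}{\mu_2-\mu_1}(M_2-M_1),\qquad A=\frac{\mu_2 M_1-\mu_1 M_2}{\mu_2-\mu_1}.
\]
Applying the congruence $P^T(\cdot)P$ to both equalities and using linearity, $P^TAP$ and $P^TBP$ come out as real linear combinations of the two diagonal matrices $P^TM_1P$ and $P^TM_2P$, hence are themselves diagonal. This exhibits a simultaneous congruence diagonalization of $A$ and $B$, contradicting the assumption that $A,B$ are not SDC.

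There is no genuine obstacle in this argument; the only substantive ingredient is Lemma~\ref{1} (any two positive semidefinite matrices are SDC), which the paper has already granted. The remainder is just the elementary fact that a two-parameter family of symmetric matrices $A+\mu B$ sits inside the two-dimensional span of any two of its members, so any congruence that diagonalizes two members diagonalizes $A$ and $B$ themselves.
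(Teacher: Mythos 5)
Your proposal is correct and follows essentially the same route as the paper: both take two distinct points $\mu_1\ne\mu_2$ in $I_{\succeq}(A,B)$, apply Lemma~\ref{1} to the positive semidefinite matrices $A+\mu_1B$ and $A+\mu_2B$ to get a common diagonalizing congruence $P$, and then recover $P^TBP$ and $P^TAP$ as linear combinations of the two diagonal matrices, contradicting that $A,B$ are not SDC. No gaps; nothing further needed.
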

\begin{proof}
Suppose in contrary that $I_{\succeq}(A,B)$ has more than one elements, then we can choose $\mu_1, \mu_2\in I_{\succeq}(A,B), \mu_1 \neq \mu_2$ such that $C:=A+\mu_1B \succeq 0$ and $D:=A+\mu_2B \succeq 0.$
 By Lemma \ref{1}, $C, D$ are SDC, i.e., there is a nonsingular matrix $P$ such that
$P^TCP, P^TDP$ are diagonal. Then $P^TBP$ is diagonal because $P^TCP- P^TDP=(\mu_1-\mu_2)P^TBP$ and $\mu_1 \neq \mu_2.$
Since $P^TAP=P^TCP-\mu_1P^TBP,$ $P^TAP$ is also diagonal. That is $A, B$ are SDC and we get a contradiction.
\end{proof}

To know  when $I_{\succeq}(A,B)$ is empty or has one element, we  need the following result.
\begin{lem}[\cite{Uhlig76}]\label{dlUhlig1}
Let $A, B\in   \mathcal{S}^n,$  B be nonsingular. Let
$B^{-1}A$  have the real Jordan normal form $diag(J_1,\ldots J_r,J_{r+1},\ldots, J_m)$, where $J_1, \ldots, J_r$ are Jordan blocks  corresponding to real eigenvalues $\lambda_1, \lambda_2,\ldots, \lambda_r$ of $B^{-1}A$ and $J_{r+1},\ldots, J_m$ are Jordan blocks  for pairs of complex conjugate roots $\lambda_i=a_i \pm {\bf i}b_i, a_i, b_i \in \Bbb R, i=r+1, r+2, \ldots, m$
of $B^{-1}A$. Then there exists a nonsingular matrix $U$ such that
\begin{align}\label{mtB}
 U^TBU=\texttt{diag}(\epsilon_1E_1,\epsilon_2E_2,\ldots,\epsilon_rE_r,E_{r+1},\ldots, E_m)
\end{align}
\begin{align}\label{mtA}
 U^TAU=\texttt{diag}(\epsilon_1E_1J_1,\epsilon_2E_2J_2,\ldots,\epsilon_rE_rJ_r,E_{r+1}J_{r+1},\ldots, E_mJ_m)
\end{align}
where $\epsilon_i=\pm1, E_i=\left(\begin{matrix}0&0&\ldots&0&1\\
 0&0&\ldots&1&0\\
\ldots&\ldots&\ldots&\ldots&\ldots\\
\ldots&\ldots&\ldots&\ldots&\ldots\\
1&0&\ldots&0&0
\end{matrix}\right);$ ${\rm dim} E_i={\rm dim} J_i=n_i; n_1+n_2+\ldots+n_m=n.$
\end{lem}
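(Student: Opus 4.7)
The plan is to pass from the real Jordan form of $B^{-1}A$ (a similarity invariant) to the claimed congruence form of the pair $(A,B)$. Since $B$ is nonsingular, there is a real nonsingular $P$ with $P^{-1}(B^{-1}A)P=J$, the real Jordan normal form. Setting $\hat A:=P^TAP$ and $\hat B:=P^TBP$, both of which are symmetric, the identity $A=BPJP^{-1}$ yields $\hat A=\hat B J$. Symmetry of $\hat A$ then forces the intertwining $\hat B J=J^T\hat B$; this is the bridge that converts information about similarity of $B^{-1}A$ into information about simultaneous congruence of $A$ and $B$.

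Next I would deduce the block structure of $\hat B$. Because $J$ is block diagonal with blocks corresponding to either a single real eigenvalue $\lambda_i$ or a complex conjugate pair $a_i\pm\mathbf{i}b_i$, and these blocks have pairwise disjoint spectra, the intertwining relation $\hat B J=J^T\hat B$ is a Sylvester equation that is uniquely (and trivially) solved off the diagonal. Thus $\hat B$ decomposes as $\texttt{diag}(\hat B_1,\ldots,\hat B_m)$ conformally with $J$, and Lemma \ref{bd1} (adapted to the real Jordan form for complex pairs) formalizes this observation. Each block $\hat B_i$ is nonsingular, symmetric, and satisfies $\hat B_iJ_i=J_i^T\hat B_i$; correspondingly $\hat A_i=\hat B_iJ_i$ is symmetric.

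The main step is to normalize each $\hat B_i$ to $\epsilon_iE_i$ (real case, $\epsilon_i=\pm1$) or $E_i$ (complex conjugate pair). For a real Jordan block, solving $\hat B_iJ_i=J_i^T\hat B_i$ entry by entry shows that $\hat B_i$ is an upper anti-triangular (persymmetric) matrix whose anti-diagonal entries are all equal to some nonzero scalar $\beta_i$. One then constructs a nonsingular $Q_i$ in the commutant of $J_i$ such that $Q_i^T\hat B_iQ_i=\mathrm{sgn}(\beta_i)\,E_i$; the corresponding $A$-block automatically becomes $\epsilon_iE_iJ_i$ because $E_iJ_i$ is symmetric. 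For a complex conjugate pair, the argument is parallel but carried out over the $2\times2$ real representation of $\mathbf{i}$, and the sign is absorbed into the orientation of the pair, so no $\epsilon$ appears. Assembling $P$ with the block-wise congruences $\texttt{diag}(Q_1,\ldots,Q_m)$ into a single nonsingular $U$ completes the proof.

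The main obstacle is the third step: producing $Q_i$ inside the commutant of $J_i$ that simultaneously fixes the Jordan structure and reduces the Hankel-like $\hat B_i$ to the standard reversal form $\pm E_i$. This is the technical heart of Uhlig's canonical form theorem for self-adjoint operators in an indefinite inner product space, and it is where the sign characteristic $\epsilon_i$ emerges as a true pencil invariant rather than an artifact of coordinates.
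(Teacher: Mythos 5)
This lemma is quoted verbatim from Uhlig (1976); the paper supplies no proof of it, so there is no internal argument to compare yours against. Your overall strategy (pass to $\hat A=P^TAP$, $\hat B=P^TBP$, use $\hat A=\hat BJ$ and symmetry to get the intertwining $\hat BJ=J^T\hat B$, then block-diagonalize and normalize block by block) is indeed the standard route to Uhlig's canonical pair form, but as written it has a genuine gap in two places. First, your step 2 asserts that the diagonal blocks of $J$ ``have pairwise disjoint spectra,'' so that the Sylvester equation forces $\hat B$ to be block diagonal conformally with the \emph{individual} Jordan blocks. That is false in general: the list $J_1,\dots,J_r$ may contain several Jordan blocks with the \emph{same} eigenvalue $\lambda$, and for such a pair the equation $\hat B_{ij}J_j=J_i^T\hat B_{ij}$ has nonzero solutions. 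The disjoint-spectra argument (and Lemma \ref{bd1} as stated in the paper) only yields a block decomposition of $\hat B$ conformal with the \emph{distinct} eigenvalues, i.e.\ one block per $C(\lambda_i)$ collecting all Jordan blocks of that eigenvalue. Splitting that block further into one summand $\epsilon_jE_j$ per Jordan block, while keeping the Jordan structure intact, is precisely where the inductive/combinatorial work of Uhlig's proof lies, and your outline skips it.

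Second, even in the single-block case you only assert the existence of a congruence $Q_i$ in the commutant of $J_i$ with $Q_i^T\hat B_iQ_i=\pm E_i$, and you explicitly defer it as ``the technical heart.'' It can be done (for one real Jordan block, $\hat B_i=E_iT_i$ with $T_i$ an invertible upper-triangular Toeplitz matrix, and one solves $Q_i^2=\pm T_i^{-1}$ inside the commutant, the sign $\epsilon_i$ being $\mathrm{sgn}$ of the anti-diagonal entry; the complex-pair case works over the $2\times2$ representation of $\mathbb{C}$, where every nonzero element is a square, so no sign appears), but none of this is in your write-up. So the proposal is a correct high-level plan that matches the known proof of the cited theorem, yet it is not a proof: the decoupling of repeated eigenvalues and the per-block normalization--—the two steps that actually produce the canonical form and the sign characteristic $\epsilon_i$--—are missing.
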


\begin{thm}\label{dl3}
Let $A, B\in \mathcal{S}^n$ be as in Lemma \ref{dlUhlig1} and $A, B$ are not SDC. The followings hold.
\begin{enumerate}
\item [(i)] if $A \succeq 0$ then $I_{\succeq}(A,B)=\{0\};$
\item [(ii)] if  $A\nsucceq 0$ and there is a real eigenvalue $\lambda_l$ of $B^{-1}A$ such that $A+(-\lambda_l)B \succeq 0$ then $$I_{\succeq}(A,B)=\{-\lambda_l\};$$
\item [(iii)] if (i) and (ii) do not occur then $I_{\succeq}(A,B)=\emptyset.$
\end{enumerate}

\end{thm}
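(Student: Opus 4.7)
My plan is to combine Lemma \ref{dlgiaosu}, which gives $|I_{\succeq}(A,B)|\leq 1$ whenever $A,B$ are not SDC, with a block-by-block PSD analysis of the Uhlig canonical form from Lemma \ref{dlUhlig1}. Parts (i) and (ii) are then immediate: the choice $\mu=0$ certifies $0\in I_{\succeq}(A,B)$ in (i) (since $A+0\cdot B=A\succeq 0$), while the hypothesis of (ii) gives $-\lambda_l\in I_{\succeq}(A,B)$; Lemma \ref{dlgiaosu} then promotes ``membership'' to ``uniqueness.''

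For (iii) I will argue the contrapositive: if some $\mu\in I_{\succeq}(A,B)$ exists, then either (i) or (ii) must hold. By Lemma \ref{dlUhlig1}, $A+\mu B\succeq 0$ is equivalent to every diagonal block
\[
\epsilon_i E_i(J_i+\mu I)\text{ for }i\leq r,\qquad E_{r+j}(J_{r+j}+\mu I)\text{ for }j\geq 1
\]
being PSD. The core step is a direct examination of each block. A real block with $n_i=1$ is the scalar $\epsilon_i(\lambda_i+\mu)$. A real block with $n_i=2$ equals $\epsilon_i\bigl(\begin{smallmatrix}0&\lambda_i+\mu\\ \lambda_i+\mu&1\end{smallmatrix}\bigr)$, which is PSD iff $\epsilon_i=1$ and $\lambda_i+\mu=0$. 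A real block with $n_i\geq 3$ has $(1,1)$ entry equal to $0$ and first row $(0,\dots,0,\epsilon_i(\lambda_i+\mu))$, so PSD forces $\lambda_i+\mu=0$; but then the anti-diagonal of $E_i(J_i+\mu I)$ vanishes while its ``super-anti-diagonal'' carries $\pm 1$'s, producing a $\bigl(\begin{smallmatrix}0&1\\1&0\end{smallmatrix}\bigr)$ principal submatrix, so the block is not PSD. A complex-pair block of size $2$ equals $\bigl(\begin{smallmatrix}-b_{r+j}&a_{r+j}+\mu\\ a_{r+j}+\mu&b_{r+j}\end{smallmatrix}\bigr)$, with trace $0$ and determinant $-b_{r+j}^2-(a_{r+j}+\mu)^2<0$, hence indefinite. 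A complex-pair block of size $2k\geq 4$ has all diagonal entries of $E_{r+j}(J_{r+j}+\mu I)$ equal to $0$ (the ``reflected'' positions land outside both the $2\times 2$ diagonal $C$-blocks of $J_{r+j}+\mu I$ and its $I_2$ super-blocks) while an off-diagonal entry equals $\pm b_{r+j}\neq 0$, so it cannot be PSD either.

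Combining these constraints, the existence of $\mu\in I_{\succeq}(A,B)$ forces $B^{-1}A$ to have no complex eigenvalues and every real Jordan block of size at most $2$. If every real block has size $1$, then $B^{-1}A$ is already diagonal in the Uhlig basis and Lemma \ref{3} yields that $A,B$ are SDC, contradicting the standing hypothesis of the theorem. Hence at least one size-$2$ real block is present, and its PSD condition pins $\mu=-\lambda_l$ for its (real) eigenvalue $\lambda_l$; consequently $A+(-\lambda_l)B=A+\mu B\succeq 0$, which is precisely case (ii). The contrapositive is established, proving (iii).

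The main obstacle will be the careful case analysis of the Uhlig blocks, particularly verifying non-PSDness of the complex-pair blocks of size $\geq 4$: this requires tracking how the anti-diagonal factor $E_{r+j}$ interacts with the sparsity pattern of $J_{r+j}+\mu I$, showing that every reflected diagonal position lands on a zero entry while at least one off-diagonal position still carries $\pm b_{r+j}\neq 0$.
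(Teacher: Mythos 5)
Your proposal is essentially the paper's own proof: it uses the Uhlig canonical form of Lemma \ref{dlUhlig1}, notes that failure of SDC forces a real Jordan block of size $\geq 2$ or a complex-pair block, checks block-by-block that positive semidefiniteness either fails for every $\mu$ or pins $\mu=-\lambda_l$ for a real eigenvalue $\lambda_l$, and invokes Lemma \ref{dlgiaosu} for uniqueness in (i) and (ii); if anything you are a bit more careful, since you treat the size-$2$ complex block explicitly whereas the paper's Case 2 is nominally stated only for $n_i\geq 4$. One small slip: for a complex block of size $2k$ with $k$ odd (sizes $6,10,\dots$) the diagonal of $E_i(J_i+\mu I)$ is not all zero --- its two middle entries are $-b_i$ and $b_i$ --- but the conclusion survives immediately because a symmetric matrix with a negative diagonal entry cannot be PSD, so the repair is one line. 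Also, your closing ``which is precisely case (ii)'' should say ``case (i) or (ii)'', since (ii) presupposes $A\nsucceq 0$; this is harmless for the contrapositive you are proving.
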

\begin{proof}
It is sufficient to prove only (iii).
Lemma \ref{dlUhlig1} allows us to  decompose $A$ and $B$ to the forms \eqref{mtA} and \eqref{mtB}, respectively. Since $A, B$ are not SDC,
at least one of the following cases must occur.

{\bf Case 1} {\em There is a Jordan block $J_i$ such that  $n_i\geq 2$ and $\lambda_i \in \Bbb R.$ }
We then consider the following principal minor of $A+\mu B:$
$$Y=\epsilon_i(E_iJ_i+\mu E_i)=\epsilon_i \left( \begin{matrix}0&0&\ldots&0&\lambda_i+\mu\\
 0&0&\ldots&\lambda_i+\mu&1\\
\ldots&\ldots&\ldots&\ldots&\ldots\\
\ldots&\ldots&\ldots&\ldots&\ldots\\
\lambda_i+\mu&1&\ldots&0&0 \end{matrix}\right)_{n_i\times n_i}.$$
If $n_i =2$ then $Y=\epsilon_i\left( \begin{matrix}0&\lambda_i+\mu\\
 \lambda_i+\mu&1 \end{matrix}\right).$
 Since  $ \mu\ne -\lambda_i,$ $Y\not\succeq0$ so  $A+\mu B\not\succeq0.$
If $n_i>2$ then $Y$ always contains the following not positive semidefinite principal minor of size $(n_i-1)\times(n_i-1):$
$$\epsilon_i \left( \begin{matrix}
 0&0&\ldots&\lambda_i+\mu&1\\
0&0&\ldots&1&0\\
\ldots&\ldots&\ldots&\ldots&\ldots\\
\lambda_i+\mu&1&\ldots&0&0\\
1&0&\ldots&0&0 \end{matrix}\right)_{(n_i-1)\times(n_i-1)}.$$    So  $A+\mu B\not\succeq0.$

{\bf Case 2} {\em   There is a  Jordan block $J_i$ such that $n_i\geq 4$ and $\lambda_i=a_i\pm {\bf i}b_i \notin \Bbb R.$}
We then consider
$$Y=\epsilon_i(E_iJ_i+\mu E_i)=\epsilon_i \left( \begin{matrix}0&0&\ldots&b_i&a_i+\mu\\
 0&0&\ldots&a_i+\mu&-b_i\\
\ldots&\ldots&\ldots&\ldots&\ldots\\
b_i&a_i+\mu&\ldots&0&0\\
a_i+\mu&-b_i&\ldots&0&0 \end{matrix}\right)_{n_i\times n_i}.$$
This matrix always contains either a principal minor of size $2 \times 2:$
$\epsilon_i \left( \begin{matrix}b_i&a_i+\mu\\
a_i+\mu&-b_i \end{matrix}\right)$
 or  a principal minor of size $4 \times 4:$
$$ \epsilon_i \left( \begin{matrix}0&0&b_i&a_i+\mu\\
 0&0&a_i+\mu&-b_i\\
b_i&a_i+\mu&0&0\\
a_i+\mu&-b_i&0&0 \end{matrix}\right).$$
 Both are not positive semidefinite  for any $\mu\in\Bbb R.$
\end{proof}

Similarly, we have the following result. We omit its proof.
\begin{thm}\label{dl4}
Let $A, B\in \mathcal{S}^n$ be  not SDC. Suppose $A$ is nonsingular and $A^{-1}B$ has real Jordan normal form $diag(J_1,\ldots J_r,J_{r+1},\ldots, J_m)$, where $J_1, \ldots, J_r$ are Jordan blocks  corresponding to real eigenvalues $\lambda_1, \lambda_2,\ldots, \lambda_r$ of $A^{-1}B$ and $J_{r+1},\ldots, J_m$ are Jordan blocks  for pairs of complex conjugate roots $\lambda_i=a_i \pm {\bf i}b_i, a_i, b_i \in \Bbb R, i=r+1, r+2, \ldots, m$
of $A^{-1}B$.
\begin{enumerate}
\item [(i)] If $A \succeq 0$ then $I_{\succeq}(A,B)=\{0\};$
\item [(ii)] If  $A\nsucceq0$ and there is a real eigenvalue $\lambda_l \neq 0$  of $A^{-1}B$
  such that $A+\left(-\dfrac{1}{\lambda_l}\right)B \succeq 0$ then $I_{\succeq}(A,B)=\left\{-\dfrac{1}{\lambda_l}\right\};$
\item [(iii)] If cases $(i)$ and $(ii)$ do not occur then $I_{\succeq}(A,B)=\emptyset.$
\end{enumerate}
\end{thm}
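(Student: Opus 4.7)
My plan is to follow the proof of Theorem \ref{dl3} with the roles of $A$ and $B$ interchanged throughout, exploiting that $A$ (rather than $B$) is nonsingular. The starting point is Lemma \ref{dlgiaosu}: since $A, B$ are not SDC, $I_{\succeq}(A,B)$ contains at most one element. Parts (i) and (ii) then follow immediately: in (i) take $\mu=0$, so that $A+\mu B=A\succeq 0$; in (ii) take $\mu=-1/\lambda_l$, which gives $A+\mu B\succeq 0$ by hypothesis. In either case Lemma \ref{dlgiaosu} guarantees no other element of $I_{\succeq}(A,B)$ can exist.

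For (iii) I argue by contradiction. Suppose some $\mu^*\in I_{\succeq}(A,B)$ exists while neither (i) nor (ii) applies. Since $A$ is nonsingular, I invoke Lemma \ref{dlUhlig1} with the roles of $A$ and $B$ swapped: there is a nonsingular $U$ with
\[
 U^TAU=\texttt{diag}(\epsilon_1E_1,\ldots,\epsilon_rE_r,E_{r+1},\ldots,E_m),
\]
\[
 U^TBU=\texttt{diag}(\epsilon_1E_1J_1,\ldots,\epsilon_rE_rJ_r,E_{r+1}J_{r+1},\ldots,E_mJ_m),
\]
where $J_1,\ldots,J_m$ are the real Jordan blocks of $A^{-1}B$. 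Hence
\[
 U^T(A+\mu^*B)U=\texttt{diag}\bigl(\epsilon_i E_i(I+\mu^* J_i)\bigr)_i,
\]
and every diagonal block must be positive semidefinite.

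Because $A, B$ are not SDC, either (a) some real block $J_i$ has size $n_i\geq 2$, or (b) some block corresponds to a complex conjugate pair $a_i\pm\mathbf{i}b_i$ with $b_i\neq 0$. In case (a), a direct computation parallel to Case 1 of Theorem \ref{dl3} exhibits a $2\times 2$ principal submatrix of the block $\epsilon_iE_i(I+\mu^* J_i)$ of the form $\epsilon_i\bigl(\begin{smallmatrix}0 & 1+\mu^*\lambda_i\\ 1+\mu^*\lambda_i & \mu^*\end{smallmatrix}\bigr)$; its PSDness forces $1+\mu^*\lambda_i=0$, which is impossible if $\lambda_i=0$, and otherwise pins $\mu^*=-1/\lambda_i$, putting us in case (ii), a contradiction. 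In case (b), as in Case 2 of Theorem \ref{dl3}, the block $E_i(I+\mu^* J_i)$ contains a $2\times 2$ principal submatrix $\bigl(\begin{smallmatrix}b_i & a_i+\mu^*\\ a_i+\mu^* & -b_i\end{smallmatrix}\bigr)$ (or a $4\times 4$ bordered analog when $n_i\geq 4$), whose trace is zero and whose determinant $-b_i^2-(a_i+\mu^*)^2$ is strictly negative, so the submatrix is indefinite regardless of $\mu^*$, again a contradiction.

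The main obstacle, exactly as in Theorem \ref{dl3}, is the block-by-block minor manipulation: one must verify that each anti-diagonal product $E_i(I+\mu J_i)$ is indeed symmetric and that the advertised principal submatrices genuinely appear with the claimed entries. These are elementary but finicky Jordan-block calculations, and the only real novelty compared with Theorem \ref{dl3} is that the linear form $\lambda_i+\mu$ appearing there is replaced here by $1+\mu\lambda_i$, which shifts the roots from $-\lambda_i$ to $-1/\lambda_i$ and explains why the nonzero-eigenvalue condition $\lambda_l\neq 0$ is required in (ii).
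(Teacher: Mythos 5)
Your proposal follows exactly the route the paper intends: the paper omits the proof of Theorem \ref{dl4}, stating only that it is obtained ``similarly'' to Theorem \ref{dl3}, and your argument is precisely that mirror image --- Lemma \ref{dlgiaosu} plus direct verification for (i)--(ii), and the swapped Uhlig decomposition with diagonal blocks $\epsilon_i E_i(I+\mu^* J_i)$ for (iii). The logic is sound, including the point that $\lambda_i=0$ (nilpotent blocks, which arise when $B$ is singular) yields an immediate contradiction, and that $1+\mu^*\lambda_i=0$ with $\lambda_i\neq0$ lands you in case (ii).

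Two computational slips are worth fixing, though neither is fatal. First, for a real Jordan block of size $n_i>2$ the $2\times2$ principal submatrix you display, $\epsilon_i\left(\begin{smallmatrix}0&1+\mu^*\lambda_i\\ 1+\mu^*\lambda_i&\mu^*\end{smallmatrix}\right)$, only literally occurs when $n_i=2$; for $n_i\ge3$ take instead rows and columns $\{1,n_i\}$ of $\epsilon_iE_i(I+\mu^*J_i)$, which give $\epsilon_i\left(\begin{smallmatrix}0&1+\mu^*\lambda_i\\ 1+\mu^*\lambda_i&0\end{smallmatrix}\right)$, and the same conclusion $1+\mu^*\lambda_i=0$ follows. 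Second, in the complex case you copied the entries from Theorem \ref{dl3} without performing the swap: with $J_i=\left(\begin{smallmatrix}a_i&b_i\\ -b_i&a_i\end{smallmatrix}\right)$ the relevant $2\times2$ principal submatrix of $E_i(I+\mu^*J_i)$ is $\left(\begin{smallmatrix}-\mu^*b_i&1+\mu^*a_i\\ 1+\mu^*a_i&\mu^*b_i\end{smallmatrix}\right)$, not $\left(\begin{smallmatrix}b_i&a_i+\mu^*\\ a_i+\mu^*&-b_i\end{smallmatrix}\right)$; its determinant $-(\mu^*b_i)^2-(1+\mu^*a_i)^2$ is still strictly negative for every real $\mu^*$ (the two terms cannot vanish simultaneously, since $\mu^*=0$ forces $1+\mu^*a_i=1$), so the indefiniteness conclusion --- and hence the proof --- stands once the entries are corrected, consistent with the substitution $\lambda_i+\mu\mapsto 1+\mu\lambda_i$ you noted but applied only to the real blocks.
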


Finally, if  $A$ and $B$ are not SDC and  both singular.   Lemma \ref{lem1} indicates that $A$ and $B$ can be simultaneously
decomposed to $\bar{A}$ and $\bar{B}$  in either  \eqref{a1} or \eqref{a3}.
If $\bar{A}$ and $\bar{B}$  take the forms  \eqref{a1} and  $A_2=0$
then $I_{\succeq}(A,B)=I_{\succeq}(A_1,B_1),$ where $A_1, B_1$ are not SDC and $B_1$ is nonsingular.
In this case we apply Theorem \ref{dl3}
to compute $I_{\succeq}(A_1,B_1).$ If $\bar{A}$ and $\bar{B}$  take the forms  \eqref{a3} and  $A_2=0.$
In this case, if $A_3$ is not positive definite then  $I_{\succeq}(A,B)=\emptyset.$ Otherwise,
$I_{\succeq}(A,B)=I_{\succeq}(A_1,B_1),$ where $A_1, B_1$ are not SDC and $B_1$ is nonsingular, again we can apply
Theorem \ref{dl3}. Therefore we need only to consider
 the case $A_2\ne0$ with noting  that  $I_{\succeq}(A,B)\subset I_{\succeq}(A_1,B_1).$

\begin{thm}\label{thm5}
Given $A, B\in \mathcal{S}^n $ are not SDC and singular such that
   $\bar{A}$ and $\bar{B}$  take the forms in either  \eqref{a1} or \eqref{a3} with $A_2\ne 0.$ Suppose that $I_{\succeq}(A_1,B_1)=[a,b], a<b.$
Then, if $a\not\in I_{\succeq}(A, B)$ and  $b\not\in I_{\succeq}(A, B)$ then  $I_{\succeq}(A, B)=\emptyset.$
 \end{thm}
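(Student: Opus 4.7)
The strategy is a proof by contradiction: assume $I_{\succeq}(A,B)\neq\emptyset$, locate its unique element in the open interval $(a,b)$, and use a Schur complement to force $A_2=0$, contradicting the hypothesis.

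First, by Lemma~\ref{dlgiaosu}, since $A,B$ are not SDC the set $I_{\succeq}(A,B)$ contains at most one element. Suppose for contradiction it is nonempty and write it as $\{\mu^*\}$. The inclusion $I_{\succeq}(A,B)\subset I_{\succeq}(A_1,B_1)=[a,b]$, together with $a,b\notin I_{\succeq}(A,B)$, places $\mu^*$ in the open interval $(a,b)$. Since $I_{\succeq}(A_1,B_1)$ has more than one element, applying the same reasoning as in the proof of Lemma~\ref{dlgiaosu} to $(A_1,B_1)$ shows that $A_1,B_1$ are SDC, and because $B_1$ is nonsingular by Lemma~\ref{lem1}, Corollary~\ref{cor1} yields $I_{\succ}(A_1,B_1)=\mathrm{int}[a,b]=(a,b)$. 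In particular $A_1+\mu^*B_1\succ 0$.

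For the decisive Schur complement step, in form \eqref{a1} the matrix $\bar A+\mu^*\bar B$ equals
$$M:=\begin{pmatrix} A_1+\mu^*B_1 & A_2 \\ A_2^T & 0\end{pmatrix},$$
while in form \eqref{a3} the same $M$ appears as the principal submatrix of $\bar A+\mu^*\bar B$ obtained by deleting the rows and columns of the middle block $A_3$. Either way $M\succeq 0$ because $\bar A+\mu^*\bar B\succeq 0$. Since $A_1+\mu^*B_1\succ 0$, the Schur complement of $M$ with respect to its leading block gives $-A_2^T(A_1+\mu^*B_1)^{-1}A_2\succeq 0$, while $(A_1+\mu^*B_1)^{-1}\succ 0$ forces the reverse inequality $A_2^T(A_1+\mu^*B_1)^{-1}A_2\succeq 0$. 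Both can hold simultaneously only when $A_2=0$, contradicting the standing hypothesis $A_2\neq 0$.

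The main obstacle I anticipate is handling form \eqref{a3}: one must check that extracting the principal submatrix indexed by the first and third block rows and columns of $\bar A+\mu^*\bar B$ recovers exactly the matrix $M$ above. This is immediate once one observes that $A_3$ is block-decoupled from the $(A_1,A_2)$-blocks, so the argument proceeds uniformly in both cases.
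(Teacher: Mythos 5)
Your proof is correct, and its decisive step is genuinely different from the paper's. You share the same skeleton: by Lemma~\ref{dlgiaosu} the set $I_{\succeq}(A,B)$ is at most a singleton $\{\mu^*\}$; the inclusion $I_{\succeq}(A,B)\subset I_{\succeq}(A_1,B_1)$ together with the hypothesis on $a,b$ puts $\mu^*$ in $(a,b)$; and since $[a,b]$ has more than one point, $A_1,B_1$ are SDC with $B_1$ nonsingular, so $I_{\succ}(A_1,B_1)=(a,b)$ and $A_1+\mu^*B_1\succ0$ (the paper invokes exactly the same facts at this stage). Where you diverge is the contradiction itself: the paper diagonalizes $A_1+\mu_0B_1$ via a congruence $Q_1$, observes $\det(A+\mu_0B)=0$, and then runs a case analysis on whether the columns of the block built from $A_2$ are linearly independent, in the dependent case introducing an auxiliary congruence $H$ to reduce back to the independent case, and finally extracts the relation $0=a_1^2\gamma_1+\cdots+a_p^2\gamma_p$ with $\gamma_i>0$. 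You replace all of this with the observation that the principal submatrix $M=\left(\begin{smallmatrix}A_1+\mu^*B_1 & A_2\\ A_2^T & 0\end{smallmatrix}\right)$ of $\bar A+\mu^*\bar B$ (in form \eqref{a3} one deletes the $A_3$ block, which is decoupled) must be positive semidefinite, so its Schur complement $-A_2^T(A_1+\mu^*B_1)^{-1}A_2\succeq0$; combined with $A_2^T(A_1+\mu^*B_1)^{-1}A_2\succeq0$ this gives $A_2^T(A_1+\mu^*B_1)^{-1}A_2=0$ and hence $A_2=0$, contradicting $A_2\ne0$. This buys a shorter, coordinate-free argument with no rank case split and no determinant step, and it in fact shows the slightly sharper statement that no interior point of $[a,b]$ can lie in $I_{\succeq}(A,B)$ when $A_2\ne0$; the paper's hands-on column argument establishes the same obstruction but at the cost of the two-case analysis. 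If you want to be fully explicit, add the one-line justifications that $X\succeq0$ and $-X\succeq0$ force $X=0$, and that $A_2^TP^{-1}A_2=0$ with $P^{-1}\succ0$ forces $A_2=0$ (apply it to each column of $A_2$); also note that Corollary~\ref{cor1} is used together with the remark after Theorem~\ref{dl1} to identify $I_{\succ}(A_1,B_1)$ with the interior of $[a,b]$, exactly as the paper does.
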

\begin{proof} We consider $\bar{A}$ and $\bar{B}$   in  \eqref{a3}, the form in
\eqref{a1} is considered similarly. Suppose in contrary that
   $I_{\succeq}(A, B)=\{\mu_0\}$ and $a<\mu_0<b.$
Since $I_{\succeq}(A_1,B_1)$ has more than one point, by Lemma \ref{dlgiaosu}, $A_1, B_1$ are SDC.
Let $Q_1$ be a $p\times p$ nonsingular matrix such that $Q_1^TA_1Q_1, Q_1^TB_1Q_1$ are diagonal, then
 $Q^T_1(A_1+\mu_0  B_1)Q_1:=\rm{diag}(\gamma_1,\gamma_2,\ldots,\gamma_p)$ is
 a diagonal matrix. Moreover,  $B_1$ is nonsingular, we have
$I_{\succ}(A_1,B_1)=(a,b),$ please see Corollary \ref{cor1}. Then
  $\gamma_i>0 $ for $i=1,2,\ldots, p$ because $\mu_0\in I_{\succ}(A_1,B_1).$
 Let  $Q:=\rm{diag}(Q_1,I_s,I_{r-s})$ we  then have
\begin{align*} 	Q^T(\bar{A}+\mu_0 \bar{ B})Q= \left(
		\begin{matrix}
			Q^T_1(A_1+\mu_0 B_1)Q_1&0_{p\times s}&Q^T_1 A_2\\ 0_{s\times p}&A_3&0_{s\times (r-s)}\\
		A_2^TQ_1&0_{(r-s)\times s}& 0_{(r-s)\times (r-s)}
		\end{matrix}
		\right).
\end{align*}
We note that $I_{\succeq}(A, B)=\{\mu_0\}$ is singleton implying   ${\rm det}(A+\mu_0B)=0$
and so  ${\rm det}(Q^T(\bar{A}+\mu_0 \bar{ B})Q)=0.$ On the other hand, since
$A_3$ is nonsingular diagonal and $A_1+\mu_0B_1\succ0,$ the first $p+s$ columns of the matrix
$Q^T(\bar{A}+\mu_0 \bar{ B})Q$ are linearly independent. One of the following cases must occur:
i) the columns of  the right side submatrix  $\left(
		\begin{matrix}Q^T_1A_2\\0_{s\times (r-s)}\\0_{(r-s)\times (r-s)} \end{matrix}
		\right)$ are linearly independent and  at least one column, suppose $(c_1, c_2, \ldots,  c_p, 0, 0,  \ldots, 0 )^T,$  is a
linear combination of the columns of the matrix
$$\left(
		\begin{matrix}Q^T_1(A_1+\mu_0. B_1)Q_1\\0_{s\times p}\\A_2^TQ_1 \end{matrix}
		\right):=({\rm column_1}|{\rm column_2}|\ldots |{\rm column_p}),$$
where ${\rm column_i}$ is the $i$th column of the matrix  or ii) 
the columns of the right side submatrix  $\left(
		\begin{matrix}Q^T_1A_2\\0_{s\times (r-s)}\\0_{(r-s)\times (r-s)} \end{matrix}
		\right)$ are linearly dependent.
If the case i) occurs then  there are scalars
$a_1, a_2, \ldots, a_p$ which are not all zero such that
\begin{align}\label{B2}
\left(
		\begin{matrix}c_1\\c_2\\ \vdots \\ c_p\\ 0 \\  \vdots \\0 \end{matrix}
		\right)=a_1 {\rm column_1}+a_2{\rm column_2}+\ldots+a_p{\rm column_p}.
\end{align}

 Equation \eqref{B2} implies that
$$ \begin{cases} c_1=a_1 \gamma_1\\ c_2=a_2  \gamma_2\\ \ldots \\ c_p = a_p  \gamma_p\\ 0=a_1 c_1+a_2 c_2+\ldots+a_p c_p\end{cases}$$
which further implies
$$ 0= (a_1)^2 \gamma_1+(a_1)^2  \gamma_2+\ldots+(a_p)^2  \gamma_p.$$ This cannot happen with $\gamma_i>0$ and $(a_1)^2+(a_2)^2+\ldots+(a_p)^2 \neq 0.$ This contradiction shows that
$I_{\succeq}(A,B)=\emptyset.$
If the case ii) happens  then there always exists a nonsingular matrix $H$ such that 
\begin{align*} 	H^TQ^T(\bar{A}+\mu_0 \bar{ B})QH= \left(
		\begin{matrix}
			Q^T_1(A_1+\mu_0 B_1)Q_1&0_{p\times s}&\hat{A}_2&0 
\\ 0_{s\times p}&A_3&0 &0\\
\hat{A}_2^T&0&0&0\\
		0&0 & 0 &0
		\end{matrix}
		\right),
\end{align*}
where $\hat{A}_2$ is a full column-rank matrix. Let
\begin{align*} \hat{A}= \left(\begin{matrix}
			Q^T_1A_1Q_1&0_{p\times s}&\hat{A}_2\\ 
0_{s\times p}&A_3&0\\
\hat{A}_2^T&0&0
		\end{matrix}
		\right), \hat{B}= \left(\begin{matrix}
			Q^T_1B_1 Q_1&0_{p\times s}&0\\ 
0_{s\times p}&0&0 \\
0&0&0 
		\end{matrix}
		\right),
\end{align*}
we have $I_{\succeq}(A,B)=I_{\succeq}(\bar{A},\bar{B})=I_{\succeq}(\hat{A},\hat{B})$
and so $I_{\succeq}(\hat{A},\hat{B})=\{\mu_0\}.$ This implies
${\rm det}(\hat{A}+\mu_0\hat{B})=0,$ and the right side submatrix $\left(
		\begin{matrix}\hat{A}_2\\0\\0\end{matrix}
		\right)$  is full column-rank.
 We  return to the case i). 
\end{proof}

 \section{Application for the GTRS}\label{sec2}
 In this section we find an optimal Lagrangian multiplier
 $$\mu^*\in I:= I_{\succeq}(A,B)\cap [0,\infty)$$ together with an optimal solution $x^*$  for  the GTRS \eqref{original}.
 We need first to recall the following optimality conditions for the GTRS \eqref{original}.
   \begin{lem}[\cite{MJ}]\label{More}
 A vector $x^*\in\Bbb R^n$ is an optimal solution to GTRS \eqref{original}  if and only if there exists  $\mu^*\ge0$ such that
\begin{align}
(A+\mu^*B)x^*+a+\mu^* b=0,\label{dk1}\\
g(x^*)\le0,\label{dk2}\\
\mu^* g(x^*)=0, \label{dk3}\\
A+\mu^* B\succeq0. \label{dk4}
\end{align}
\end{lem}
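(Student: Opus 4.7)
The plan is to prove the two directions separately. The ``if'' direction is a one-shot convexity argument using the Lagrangian; the nontrivial content lies in the ``only if'' direction, where producing a single $\mu^*\ge 0$ that simultaneously fulfills \eqref{dk1}--\eqref{dk4} is delicate because the underlying problem is nonconvex.

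For sufficiency, assume \eqref{dk1}--\eqref{dk4} hold and form the Lagrangian
\[
L(x) := f(x)+\mu^* g(x) = x^T(A+\mu^* B)x + 2(a+\mu^* b)^T x + \mu^* c.
\]
By \eqref{dk4} the Hessian $2(A+\mu^* B)$ is positive semidefinite, so $L$ is convex on $\Bbb R^n$; by \eqref{dk1}, $\tfrac12\nabla L(x^*)=(A+\mu^* B)x^*+a+\mu^* b=0$, so $x^*$ is a global minimizer of $L$. For any feasible $y$ (i.e.\ $g(y)\le 0$), using \eqref{dk3} together with $\mu^*\ge 0$ and $g(y)\le 0$,
\[
f(x^*) = L(x^*) \le L(y) = f(y)+\mu^* g(y) \le f(y),
\]
so $x^*$ is optimal.

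For necessity, condition \eqref{dk2} is merely the feasibility of $x^*$. To produce $\mu^*\ge 0$ and the remaining three conditions in one stroke, I would invoke the joint convexity (Dines' theorem) of the two-dimensional image
\[
\Omega := \{(f(x)-\lambda^*,\,g(x)) : x\in\Bbb R^n\}\subset\Bbb R^2,
\]
where $\lambda^*=f(x^*)$. Optimality of $x^*$ forces $\Omega$ to miss the open region $\{(u,v):u<0,\,v\le 0\}$, so a supporting-hyperplane argument yields nonnegative scalars $(\alpha,\beta)\ne(0,0)$ with $\alpha(f(x)-\lambda^*)+\beta g(x)\ge 0$ for every $x\in\Bbb R^n$, with equality at $x=x^*$. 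Once the case $\alpha=0$ is excluded, setting $\mu^*:=\beta/\alpha\ge 0$ gives
\[
f(x)-\lambda^*+\mu^* g(x)\ge 0\qquad\forall x\in\Bbb R^n,
\]
with equality at $x^*$. Plugging in $x=x^*$ forces $\mu^* g(x^*)=0$, which is \eqref{dk3}; the unconstrained stationarity of the quadratic $f+\mu^* g$ at its global minimizer $x^*$ is exactly \eqref{dk1}; and boundedness below of this quadratic on all of $\Bbb R^n$ forces its Hessian $2(A+\mu^* B)$ to be positive semidefinite, which is \eqref{dk4}.

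The main obstacle is ruling out $\alpha=0$ in the separation step. If $\alpha=0$ then $\beta>0$ and $g(x)\ge 0$ for every $x\in\Bbb R^n$, which combined with feasibility forces $g(x^*)=0$ with $x^*$ a global minimizer of $g$ itself. I would handle this degenerate regime by a small perturbation $c\rightsquigarrow c+\epsilon$ that restores a strictly feasible (Slater) point, apply the argument above to each perturbed problem to produce multipliers $\mu^*_\epsilon$, and pass to the limit $\epsilon\to 0^+$; the admissible multiplier set remains bounded because the optimal value is finite and $x^*$ is attained, yielding a limiting $\mu^*$ that satisfies \eqref{dk1}--\eqref{dk4}.
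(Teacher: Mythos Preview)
The paper does not prove this lemma; it is quoted from Mor\'e \cite{MJ} (Theorem~3.2 there), so there is no in-paper argument to compare against. It is worth noting that Mor\'e's original statement carries the Slater hypothesis ``$g(\bar x)<0$ for some $\bar x$'', which the paper has dropped in transcription.

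Your sufficiency argument is correct. For necessity there are two genuine gaps. First, Dines' theorem asserts convexity of the joint range of two \emph{homogeneous} quadratic forms; for non-homogeneous quadratics the set $\Omega$ need not be convex. With $n=1$, $f(x)=x^2+2x$, $g(x)=x^2$ (so $\lambda^*=0$), the points $(3,1)$ (from $x=1$) and $(-1,1)$ (from $x=-1$) lie in $\Omega$ but their midpoint $(1,1)$ does not. The standard repair is to homogenize by adjoining an extra variable and apply Dines in $\Bbb R^{n+1}$; that can be made to work under Slater, but it is not what you wrote, and the separation step then requires care to rule out the hyperplane picking up only the homogenizing direction.

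Second, and fatally, your perturbation argument for the non-Slater case cannot succeed, because the ``only if'' direction is actually \emph{false} without Slater. Take $n=1$, $A=0$, $a=1$, $B=1$, $b=0$, $c=0$, so $f(x)=2x$ and $g(x)=x^2$. The unique feasible point $x^*=0$ is optimal, yet \eqref{dk1} reads $\mu^*\cdot 0+1=0$, which has no solution $\mu^*$. If you relax to $g(x)\le\epsilon$ (i.e.\ $c\rightsquigarrow c-\epsilon$, the only direction that creates a Slater point), the optimal point is $x=-\sqrt{\epsilon}$ and the multiplier is $\mu^*_\epsilon=1/\sqrt{\epsilon}\to\infty$. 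Your assertion that ``the admissible multiplier set remains bounded because the optimal value is finite and $x^*$ is attained'' is precisely what fails. In short, the necessity direction can be proved along the lines you sketch only after restoring Mor\'e's Slater assumption; without it the statement you are trying to prove is not true.
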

In fact, the conditions \eqref{dk1} and \eqref{dk4} are necessary and sufficient   for the GTRS \eqref{original}
to be bounded below  \cite{Xia}. However,
 a bounded GTRS may have no optimal solution, see, for example \cite{Adachi}. The conditions \eqref{dk2}-\eqref{dk3}
 are thus added to guarantee the existence of an optimal solution to GTRS \eqref{original}. To check
  whether a $\mu \in I$ satisfies  conditions \eqref{dk1}-\eqref{dk3} we  need to apply
the following result.
 \begin{lem}[\cite{MJ}]\label{More1}
 Suppose $I_{PD}:=I_{\succ}(A,B)\cap [0,\infty)$ is nonempty. If $x(\mu)$ is the solution of
 \eqref{dk1}, and if the function $\varphi:\Bbb R \rightarrow \Bbb R$
 is defined on $I_{PD}$ by
 $$\varphi(\mu)=g[x(\mu)],$$
 then  $\varphi$ is strictly decreasing on $I_{PD}$ unless $x( \cdot)$ is constant on $I_{PD}.$
\end{lem}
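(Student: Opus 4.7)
The plan is to prove the strict monotonicity by computing $\varphi'(\mu)$ explicitly and observing that its sign is controlled by a quadratic form in $(A+\mu B)^{-1}$, which is positive definite on $I_{PD}$. The only way $\varphi'$ can vanish at a point is to force a rigid algebraic condition on $x(\mu)$ that propagates to the whole interval, yielding the dichotomy in the statement.

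First I would differentiate the defining relation $(A+\mu B)x(\mu)=-(a+\mu b)$, which is well-posed on $I_{PD}$ since $A+\mu B\succ 0$ there, to obtain
\begin{equation*}
(A+\mu B)\,x'(\mu) = -\bigl(Bx(\mu)+b\bigr),
\qquad
x'(\mu) = -(A+\mu B)^{-1}\bigl(Bx(\mu)+b\bigr).
\end{equation*}
Then I would differentiate $\varphi(\mu)=x(\mu)^{T}Bx(\mu)+2b^{T}x(\mu)+c$, using symmetry of $B$, to get $\varphi'(\mu)=2\bigl(Bx(\mu)+b\bigr)^{T}x'(\mu)$, which after substitution becomes
\begin{equation*}
\varphi'(\mu) = -2\,\bigl(Bx(\mu)+b\bigr)^{T}(A+\mu B)^{-1}\bigl(Bx(\mu)+b\bigr)\le 0
\end{equation*}
for every $\mu\in I_{PD}$, with equality iff $Bx(\mu)+b=0$.

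Next I would analyze the equality case. Fix $\mu_{0}\in I_{PD}$ with $Bx(\mu_{0})+b=0$, set $x_{0}:=x(\mu_{0})$, and substitute $Bx_{0}=-b$ into the identity $Ax_{0}+\mu_{0}Bx_{0}=-a-\mu_{0}b$ to deduce $Ax_{0}=-a$ as well. For any other $\mu\in I_{PD}$ one then has $(A+\mu B)x_{0}=Ax_{0}+\mu Bx_{0}=-a-\mu b$, and by uniqueness of the solution on $I_{PD}$ (where $A+\mu B$ is nonsingular) this forces $x(\mu)=x_{0}$ throughout $I_{PD}$. Hence $x(\cdot)$ is constant whenever $\varphi'$ vanishes at any single point; contrapositively, if $x(\cdot)$ is not constant then $Bx(\mu)+b\ne 0$ for all $\mu\in I_{PD}$, and the displayed formula yields $\varphi'(\mu)<0$ strictly, giving strict monotonicity.

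The main technical obstacle is the propagation step, namely arguing that vanishing of $\varphi'$ at a single point already forces $x(\cdot)$ to be constant on the entire interval; this is where one must exploit the linearity in $\mu$ of the system $(A+\mu B)x=-(a+\mu b)$ together with invertibility of $A+\mu B$ on $I_{PD}$. Everything else is routine matrix calculus, but one should also briefly verify that $x(\mu)$ is smooth on $I_{PD}$ (immediate from positive definiteness, which makes $(A+\mu B)^{-1}$ a real-analytic function of $\mu$), so that the derivative computation is legitimate.
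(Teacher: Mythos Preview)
Your proof is correct. Note, however, that the paper does not supply its own proof of this lemma: it is quoted as a known result from Mor\'{e}~\cite{MJ}, so there is no in-paper argument to compare against. Your derivative computation $\varphi'(\mu)=-2(Bx(\mu)+b)^{T}(A+\mu B)^{-1}(Bx(\mu)+b)$ and the propagation step (vanishing at one point forces $Ax_{0}=-a$ and $Bx_{0}=-b$, hence $x(\mu)\equiv x_{0}$) are exactly the argument Mor\'{e} gives in the cited source, so your proposal reproduces the original proof faithfully.
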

  We note that $I$ can be a nonempty interval while $I_{PD}$ is empty. Fortunately,
  our  Corollary \ref{cor1} shows that this case happens only when  $A$ and $B$  are both singular.
  Then we can decompose $A, B$ to $\bar{A}, \bar{B}$ and apply
  Theorem \ref{thmb} if $A, B$ are SDC,  Theorem \ref{dl3} or Theorem \ref{dl4}
  or Theorem \ref{thm5} if $A, B$ are not SDC.

By Lemma \ref{More},   if
   $I=\emptyset,$ the GTRS \eqref{original}
  has no optimal solution,  it is even unbounded from below \cite{Xia}.
  If $I\ne\emptyset$ but has only one element:  $I=I_{\succeq}(A,B)\cap [0,+\infty)=\{\mu\}.$ Then
  we need only to solve the linear system  \eqref{dk1}-\eqref{dk3}
  to check whether $\mu^*=\mu.$ If $I$ is an interval, suppose
   $I=[\mu_1, \mu_2],$ where $\mu_1\ge0$ and $\mu_2$ may be $\infty,$ then $A, B$ are SDC.
We assume that $A={\rm diag}(\alpha_1,\alpha_2,\ldots,\alpha_n),
B={\rm diag}(\beta_1,\beta_2,\ldots,\beta_n).$
The equation  \eqref{dk1} is then
 of the following simple form
\begin{align}\label{equations}
(\alpha_i+\mu \beta_i) x_i= -(a_i+\mu b_i), i=1,2,\ldots,n.
\end{align}
Solving \eqref{equations} for a fixed $\mu$ is very simple, the main duty is thus
finding $\mu^*$ in the following cases.
\begin{enumerate}
\item If at least one of the matrices $A$ and $B$ is nonsingular then $I_{PD}\ne\emptyset$ and
 $I ={\rm closure}(I_{PD}).$
The GTRS \eqref{original} then attains a unique optimal solution $x^*$ at an optimal Lagrange multiplier $\mu^*$ \cite{MJ}.
We first check whether $\mu^*=0.$ If not, we apply Lemma \ref{More1} for finding $\mu^*$ such that $\varphi(\mu^*)=0.$
Observe from \eqref{equations} that $x(\mu)$ is constant on $I_{PD}$
only when $\beta_i=b_i=0$ for all $i=1,2,\ldots,n.$ This case can be dealt with easily since $g(x)$ is then constant.
 Otherwise, $\varphi(\mu)$ is  strictly decreasing on $I_{PD}$
and we have the following results.
\begin{lem}[\cite{Feng},\cite{Adachi}]\label{findmu}
Suppose the Slater condition holds for the GTRS \eqref{original}, i.e., there exists $\bar{x}\in\Bbb R^n$ such that
$g(\bar{x})<0,$ and $I_{PD}\ne\emptyset.$
\begin{enumerate}
\item[(a)] If $\varphi(\mu)>0$ on $I_{PD}$ and $\mu_2<\infty,$ then $\mu^*=\mu_2;$
\item[(b)] If $\varphi(\mu)<0$ on $I_{PD}$  then $\mu^*=\mu_1;$
\item[(c)] If $\varphi(\mu)$ changes its sign on $I_{PD}$ then $\mu^*\in  I_{PD};$
\item[(d)] If $\varphi(\mu)>0$ on $I_{PD}$ then $\mu_2$ cannot be $\infty.$
\end{enumerate}
\end{lem}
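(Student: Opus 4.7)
The plan is to exploit Lagrangian duality. Let $L(\mu,x)=f(x)+\mu g(x)$ and define the dual function $d(\mu)=\inf_{x\in\Bbb R^n}L(\mu,x)$. On $I_{PD}$ the infimum is attained at the unique minimizer $x(\mu)=-(A+\mu B)^{-1}(a+\mu b)$ specified by (\ref{dk1}); by the envelope theorem (equivalently, differentiating $d(\mu)=L(\mu,x(\mu))$ and using $\partial L/\partial x=0$ at $x(\mu)$) one obtains $d'(\mu)=g(x(\mu))=\varphi(\mu)$ on $I_{PD}$. Hence the sign of $\varphi$ dictates the monotonicity of $d$ on $I_{PD}$. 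Under the Slater condition, strong duality for the GTRS holds, so the optimal multiplier $\mu^{*}$ of Lemma \ref{More} coincides with the maximizer of $d$ over $I=[\mu_1,\mu_2]$.

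With this preparation the four claims reduce to reading off where $d$ attains its maximum. For (a), $\varphi>0$ on $I_{PD}$ forces $d$ to be strictly increasing there, so if $\mu_2<\infty$ the maximum on $I$ is the right endpoint, giving $\mu^{*}=\mu_2$. For (b), $\varphi<0$ on $I_{PD}$ makes $d$ strictly decreasing, and the maximum is attained at the left endpoint $\mu_1$, yielding $\mu^{*}=\mu_1$. For (c), if $\varphi$ changes sign on the open interval $I_{PD}$ then, since $\varphi$ is continuous on $I_{PD}$, the intermediate value theorem produces $\mu^{*}\in I_{PD}$ with $\varphi(\mu^{*})=0$; setting $x^{*}:=x(\mu^{*})$, equation (\ref{dk1}) holds by construction, $g(x^{*})=\varphi(\mu^{*})=0$ delivers both (\ref{dk2}) and (\ref{dk3}), and (\ref{dk4}) is immediate from $\mu^{*}\in I_{PD}$.

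For (d) I would argue by contradiction. Suppose $\varphi>0$ on $I_{PD}$ and $\mu_2=\infty$; then $d$ is strictly increasing on the whole ray $(\mu_1,\infty)$. But the Slater vector $\bar x$ yields the weak-duality bound
\[
d(\mu)\ \le\ L(\mu,\bar x)\ =\ f(\bar x)+\mu g(\bar x)\ \xrightarrow[\mu\to\infty]{}\ -\infty,
\]
because $g(\bar x)<0$. This is incompatible with $d$ being strictly increasing on $(\mu_1,\infty)$, a contradiction, so $\mu_2<\infty$.

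The main obstacle is not the individual case analysis but the preparation: the clean identity $d'(\mu)=\varphi(\mu)$ on $I_{PD}$ together with the strong-duality assertion that $\mu^{*}$ is exactly the maximizer of $d$ on $I$. Both facts are classical for the GTRS under Slater's condition but require a careful pointer, especially to reconcile them with the KKT-type conditions of Lemma \ref{More}. A secondary subtlety is the boundary behavior in (a) and (b): if $\mu^{*}$ equals an endpoint where $A+\mu^{*}B$ is only positive semidefinite, one must justify solvability of (\ref{dk1}) and feasibility of a limit vector $x^{*}$ via a continuity argument along $\mu\to\mu^{*}$ within $I_{PD}$.
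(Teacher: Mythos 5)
The paper does not prove Lemma \ref{findmu} at all: it is imported verbatim from the cited references (Feng et al.\ and Adachi--Nakatsukasa), and the surrounding text only uses it together with Lemma \ref{More} and the strict monotonicity of $\varphi$ from Lemma \ref{More1}. So there is no in-paper argument to match; judged on its own, your dual-function route is sound and is essentially the classical argument behind the cited results. The identity $d'(\mu)=\varphi(\mu)$ on $I_{PD}$ is correct, part (c) is a clean constructive verification of \eqref{dk1}--\eqref{dk4} at a zero of $\varphi$, and your proof of (d) via the weak-duality bound $d(\mu)\le f(\bar x)+\mu g(\bar x)\to-\infty$ against monotonicity of $d$ on $(\mu_1,\infty)$ is correct (note $I_\succ={\rm int}(I_{\succeq})$ guarantees $I_{PD}$ contains that ray). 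Two points deserve tightening rather than a pointer. First, in (a) and (b) you assert that the maximizer of $d$ over $I=[\mu_1,\mu_2]$ is the appropriate endpoint; to rule out the opposite endpoint and a downward jump of $d$ there, use that $d$, being an infimum of functions affine in $\mu$, is concave and upper semicontinuous, hence continuous at the endpoints of its effective domain, so strict increase (resp.\ decrease) of $d$ on the interior forces the unique maximizer to be $\mu_2$ (resp.\ $\mu_1$); combined with the fact that any KKT pair $(x^*,\mu^*)$ from Lemma \ref{More} makes $\mu^*$ a dual maximizer (this needs only the saddle-point inequality and weak duality, not the full strength of the S-lemma), the conclusion follows. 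Second, the closing worry about solvability of \eqref{dk1} at a semidefinite endpoint is not actually needed for the statement as given: the lemma presupposes that an optimal multiplier $\mu^*$ in the sense of Lemma \ref{More} exists, and your argument only needs to locate it; existence/attainment at the endpoints is a separate issue treated in the references.
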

The case (d) indicates that if $I=[\mu_1, \infty)$  and $\varphi(\mu_1)>0$ then   $\mu_1<\mu^*<\infty.$
\item If both $A$ and $B$ are singular,   by Lemma \ref{lem2}, $B, A$ are decomposed to
the form either \eqref{pt3} or \eqref{pt4}, and are now in the form 
\begin{align}\label{pt5}
B=\texttt{diag}(\beta_1,  \ldots,\beta_p,0,\ldots,0),
A=\texttt{diag}(\alpha_1,  \ldots,\alpha_p,0,\ldots,0)
\end{align}   or
\begin{align}\label{pt6}
B=\texttt{diag}(\beta_1,  \ldots,\beta_p,0,\ldots,0),
A=\texttt{diag}(\alpha_1,  \ldots,\alpha_p,\alpha_{p+1},\ldots,  \ldots,\alpha_{n})
\end{align}
where $\beta_1, \beta_2,\ldots,\beta_p$  are nonzero. We  have
$$I_{\succeq}(A,B)=I_{\succeq}(A_1,B_1)={\rm closure}\left(I_{\succ}(A_1,B_1)\right),$$
where $B_1=\texttt{diag}(\beta_1, \beta_2,\ldots,\beta_p), A_1=\texttt{diag}(\alpha_1, \alpha_2,\ldots,\alpha_p),$
please see  Theorem \ref{thmb} and Corollary \ref{cor1}.  We note also that
if $\alpha_i>0$ for $i=p+1, \ldots, n,$ then $I_{\succ}(A,B)=I_{\succ}(A_1, B_1)$
and we can apply Lemma \ref{findmu}. Otherwise, $I_{\succ}(A,B)=\emptyset$
and the GTRS \eqref{original} may have no optimal solution. We deal with this case as follows.

If  $A, B$ take the form \eqref{pt5},
the equations \eqref{equations} become
\begin{align}\label{equationsp}
(\alpha_i+\mu \beta_i) x_i = -(a_i+\mu b_i), i=1,2,\ldots,p;\\
0 = -(a_i+\mu b_i), i=p+1,\ldots,n.\nonumber
\end{align}
If $a_i=b_i=0$ for $i=p+1,\ldots,n,$ then \eqref{original} is reduced to a GTRS of $p$ variables
with matrices  $A_1, B_1$ such that $I_{\succ}(A_1,B_1)\ne\emptyset.$
We then apply Lemma \ref{findmu} for it. Otherwise,
either \eqref{equationsp} has no solution $x$ for all $\mu\in I$  or
it has solutions at only one  $\mu\in I.$  Then we  check easily whether $\mu^*=\mu.$

If $A, B$ take  the form \eqref{pt6}, the equations \eqref{equations} become
\begin{align}\label{equationsp1}
(\alpha_i+\mu \beta_i) x_i &= -(a_i+\mu b_i), i=1,2,\ldots,p;\\
\alpha_i x_i &= -(a_i+\mu b_i), i=p+1,p+2,\ldots,p+s;\nonumber\\
0 &= -(a_i+\mu b_i), i=p+s+1,\ldots,n.\nonumber
\end{align}
By the same arguments as above, either \eqref{original} is then reduced to a GTRS of $p+s$ variables
with matrices
\begin{align*}
&\bar{A}_1=\texttt{diag}(A_1, \alpha_{p+1},  \ldots,\alpha_{p+s})=\texttt{diag}(\alpha_1,\ldots, \alpha_p, \alpha_{p+1}, \ldots,\alpha_{p+s}),\\
&\bar{B}_1=\texttt{diag}(B_1, \underbrace{0,\ldots, 0}_{s \text{ zeros }})=\texttt{diag}(\beta_1,  \ldots,\beta_p,\underbrace{0,\ldots, 0}_{s \text{ zeros }})
\end{align*}
  such that $I_{\succ}(\bar{A}_1,\bar{B}_1)\ne\emptyset.$
We then apply Lemma \ref{findmu} for it. Otherwise,
either \eqref{equationsp1} has no solution $x$ for all $\mu\in I$  or
it has solutions at only one  $\mu\in I.$  

\end{enumerate}

\section{Conclusion and remarks}
In this paper, we showed that for a given pair of real symmetric matrices $A, B,$ the set $I_{\succeq}(A, B)$ of real values  $\mu\in\Bbb R$ such that the matrix pencil $A+\mu B$ is positive semidefinite is always computable by solving the generalized eigenvalue problem
 of   $n\times n$ dimension.
The computation is considered in two separated cases:
 If   $A, B$ are not simultaneously diagonalizable via congruence (SDC),  $I_{\succeq}(A, B)$
is either  empty or singleton, while if $A, B$ are SDC, $I_{\succeq}(A, B)$ can be empty, singleton or an interval. 
In case $I_{\succeq}(A, B)$
 is an interval, if  either $A$ or $B$ is nonsingular,   $I_{\succeq}(A, B)$ is the closure of 
  the positive definite interval $I_{\succ}(A, B).$ Otherwise, $A, B$ are decomposable to block diagonals
 of submatrices $A_1, B_1$  with $B_1$ nonsingular such that
  $I_{\succeq}(A, B)$ is now the closure of $I_{\succ}(A_1, B_1).$
  With $I_{\succeq}(A, B)$ in hand, we are able to solve the  generalized trust region subproblem \eqref{original}
 not only in the {\em easy-case} when $I_{\succ}(A, B)$ is nonempty but also in the {\em hard-case} by only solving the linear
 equations. 
% Indeed, we meet the hard-case only when either $I_{\succeq}(A, B)$  is singleton
% or $I_{\succeq}(A, B)$  is an interval and $I_{\succ}(A, B)$ is nonempty. If the former occurs we need
% only to solve a system of linear equations to check whether $\mu$ is the optimal Lagrange multiplier of the GTRS \eqref{original}.
% If the latter case happens, the GTRS \eqref{original} is then unbounded below or reduced to a GTRS of $p, p<n,$ variables with
% matrices $A_1, B_1$ such that  $I_{\succ}(A_1, B_1)\ne\emptyset.$
Our result needs only to solve the  generalized  eigenvalue problem of   $n\times n$  dimension compared with
an $(2n + 1)\times (2n + 1)$ generalized eigenvalue problem in \cite{Adachi}. On the other hand,
we can completely deal with the hard-case of the GTRS \eqref{original}, which was an open problem in
\cite{MJ} and \cite{Hsia}.

\section*{References}

\bibliography{mybibfile}

\end{document}